\pgfplotsset{compat=newest}
\DeclareMathOperator*{\argmin}{argmin}
\newtheorem{assumption}{Assumption}
\newcommand{\image}{\mathrm{Im}}
\newcommand{\Z}{\mathbb{Z}}
\newcommand{\R}{\mathbb{R}}
\colorlet{texcscolor}{blue!50!black}
\colorlet{texemcolor}{red!70!black}
\colorlet{texpreamble}{red!70!black}
\colorlet{codebackground}{black!25!white!25}
\newcommand\clearrow{\global\let\rowmac\relax}	
\newcommand{\rev}[1]{\textcolor{black}{#1}}
\begin{document}

\title{A Two-Level Distributed Algorithm for Nonconvex Constrained Optimization 
}

\author{Kaizhao Sun \and
		X. Andy Sun 
}

\institute{Kaizhao Sun \at
			H. Milton Stewart School of Industrial and Systems Engineering, Georgia Institute of Technology, Atlanta, GA \\
              \email{ksun46@gatech.edu}        
           \and
           X. Andy Sun \at
           Sloan School of Management, Massachusetts Institute of Technology, Cambridge, MA \\
              \email{sunx@mit.edu}
}

\date{Received: date / Accepted: date}

\maketitle
\begin{abstract}
This paper aims to develop distributed algorithms for nonconvex optimization problems with complicated constraints associated with a network. The network can be a physical one, such as an electric power network, where the constraints are nonlinear power flow equations, or an abstract one that represents constraint couplings between decision variables of different agents. Despite the recent development of distributed algorithms for nonconvex programs, highly complicated constraints still pose a significant challenge in theory and practice. 
We first identify some difficulties with the existing algorithms based on the alternating direction method of multipliers (ADMM) for dealing with such problems. 
We then propose a reformulation that enables us to design a two-level algorithm, which embeds a specially structured three-block ADMM at the inner level in an augmented Lagrangian method (ALM) framework.
Furthermore, we prove the global and local convergence as well as iteration complexity of this new scheme for general nonconvex constrained programs, and show that our analysis can be extended to handle more complicated multi-block inner-level problems.
Finally, we demonstrate with computation that the new scheme provides convergent and parallelizable algorithms for various nonconvex applications, and is able to complement the performance of the state-of-the-art distributed algorithms in practice by achieving either faster convergence in optimality gap or in feasibility or both.
\keywords{Distributed Optimization \and Augmented Lagrangian Method \and  Alternating Direction Method of Multipliers}
\subclass{90C06 \and 90C26 \and 90C30 \and 90C35}

\end{abstract}

\section{Introduction}\label{section: Introduction}
This paper develops a new two-level distributed algorithm with global and local convergence guarantees for solving general smooth and nonsmooth {constrained} nonconvex optimization problems. We will start with a general constrained optimization model that is motivated by nonlinear network flow problems, and then explore reformulations for distributed computation. This process of reformulation leads us to observe two structural properties that a distributed reformulation should possess, which in fact pose some challenges to existing distributed algorithms in terms of convergence and practical performance. This observation inspired us to develop the two-level distributed algorithm. We will summarize our contributions in the end of this section.

\subsection{Constrained Nonconvex Optimization over a Network}
Consider a connected, undirected graph\footnote{In this paper, we use ``networks'' and ``graphs'' interchangeably.} $G(\mathcal{V},\mathcal{E})$ with a set of nodes $\mathcal{V}$ and a set of edges $\mathcal{E}$. A centralized constrained optimization problem on $G$ is given as
\begin{subequations}\label{eq:networkExample}
	\begin{align}
		\min \quad& \sum_{i\in \mathcal{V}}f_i(x_i) \label{eq:networkExampleObj}\\
		\mathrm{s.t.} \quad 
		& h_i(x_i,\{x_j\}_{j\in\delta(i)}) = 0, \quad \forall i \in \mathcal{V}, \label{eq:networkExampleC1} \\
		& g_i(x_i, \{x_j\}_{j\in \delta(i)}) \le 0,\quad \forall i \in \mathcal{V},\label{eq:networkExampleC2}\\
		& x_i\in \mathcal{X}_i, \quad \forall i \in \mathcal{V}, \label{eq:networkExampleC3}
	\end{align}
\end{subequations}
where each node $i\in\mathcal{V}$ of the graph $G$ is associated with a decision variable $x_i$ and a cost function $f_i(x_i)$ as in \eqref{eq:networkExampleObj}. Variable $x_i$ and variables $x_j$ of $i$'s adjacent nodes $j\in\delta(i)$  are coupled through constraints \eqref{eq:networkExampleC1}-\eqref{eq:networkExampleC2}, and $\mathcal{X}_i$  in \eqref{eq:networkExampleC3} represents some constraints only for $x_i$. The functions $f_i$, $h_i$, $g_i$, and the set $\mathcal{X}_i$ may be nonconvex.

Any constrained optimization problem can be reformulated as \eqref{eq:networkExample} after proper transformation. An especially interesting motivation for us is the nonlinear network flow problems. In this case, the graph $G$ represents a physical network such as an electric power network, a natural gas pipeline network, or a water transport network, where the variables $x_i$ in \eqref{eq:networkExample} are nodal potentials such as electric voltages, gas pressures, or water pressures, and the constraints $h_i$ and $g_i$ are usually nonconvex functions that describe the physical relations between nodal potentials and flows on the edges, flow balance at nodes, and flow capacity constraints. Notice that a node $i$ in the graph can also represent a sub-network of the entire physical network, and then the constraints could involve variables in adjacent sub-networks. There has been much recent interest in solving nonlinear network flow problems, e.g. applications in the optimal power flow problem in electric power network \cite{kocuk2016strong}, the natural gas nomination problem \cite{pfetsch2015validation}, and the water network scheduling problem \cite{d2015mathematical}. 

In many situations, it is desirable to solve problem \eqref{eq:networkExample} in a distributed manner, where each node $i$ represents an individual agent that solves a localized problem, while agents coordinate with their neighbors to solve the overall problem. Each agent need to handle its own set of local constraints $h_i$, $g_i$, and $\mathcal{X}_i$. For example, agents may be geographically dispersed with local constraints representing the physics of the subsystems, which cannot be controlled by other agents; or agents may have private data in their constraints, which cannot be shared with other agents; or the sheer amount of data needed to describe constraints or objective could be too large to be stored or transmitted in distributed computation between agents. These practical considerations pose restrictions that each agent in a distributed algorithm has to deal with a set of complicated, potentially nonconvex, constraints. 

\subsection{Necessary Structures of Distributed Formulations}\label{subsub: property}
In order to do distributed computation, the centralized formulation \eqref{eq:networkExample} first needs to be transformed into a formulation to which a distributed algorithm could be applied. We call such a formulation a \emph{distributed formulation}, whose form may depend on specific distributed algorithms as well as on the structure of the distributed computation, e.g. which variables and constraints are controlled by which agents and in what order computation and communication can be carried out. Despite the great variety of distributed formulations, we want to identify some desirable and necessary features for a distributed formulation. 

One desirable feature is the capability of \textit{parallel decomposition} so that all agents can solve their local problems in parallel, rather than in sequence. To realize this, each agent needs a local copy of its neighboring agents' variables. For problem \eqref{eq:networkExample}, we may introduce a local copy $x^i_j$ of the original variable $x_j$ and a global copy $\bar{x}_j$, and enforce consensus as
	\begin{align}\label{eq:dupSchemeParallel}
		x_j = \bar{x}_j, \; x^i_j = \bar{x}_j, \quad\forall j\in\mathcal{V}, \; i\in\delta(j).
	\end{align} 
	Using this duplication scheme, a distributed formulation of \eqref{eq:networkExample} can be written as 
	\begin{subequations}\label{formulation: generic} 
		\begin{align}
		\min_{x, \bar{x}} \quad & f({x})=\sum_{i\in\mathcal{V}} f_i(x^i) \label{eq: DistributedObj0} \\
		\mathrm{s.t.}  \quad  
		& A{x}+B\bar{x}=0\label{eq:DistributedConcensus0},\\
		& x^i \in \mathcal{X}_i,~~\forall i\in \mathcal{V},\quad \color{black}{\bar{x}\in \bar{\mathcal{X}}}.\label{eq:DistributedConstr}
	\end{align}
	\end{subequations}	
\textcolor{black}{
    In problem \eqref{formulation: generic}, the optimization variables are ${x}=[\{x^i\}_{i\in\mathcal{V}}]\in \R^{n_1}$ and $\bar{x}=[\{\bar{x}_j\}_{j\in\mathcal{V}}]\in \R^{n_2}$. Each subvector $x^i=[x_i,\{x^i_j\}_{j\in\delta(i)}]\in \R^{n_{1i}}$ of $x$ denotes all the local variables controlled by agent $i$ including the original variable $x_i$ and the local copies $x^i_j$; each subvector $\bar{x}_j$ of $\bar{x}$ denotes a global copy of $x_j$. The set $\mathcal{X}_i\subseteq \R^{n_{1i}}$ is defined as $\mathcal{X}_i:=\{v:~h_i(v) = 0,~g_i(v)\leq 0\}$, so the original constraints \eqref{eq:networkExampleC1}-\eqref{eq:networkExampleC2} are decoupled into each agent's local constraints $\mathcal{X}_i$, which also absorb the constraints \eqref{eq:networkExampleC3}. Additionally, the global copy $\bar{x}$ is constrained in some simple convex set $\bar{\mathcal{X}}\subseteq \R^{n_2}$. The only coupling among agents are  \eqref{eq:DistributedConcensus0}, which formulate the consensus constraint \eqref{eq:dupSchemeParallel} with $A\in \R^{m\times n_1}$ and $B\in \R^{m\times n_2}$.} 
An alternating optimization scheme is then natural, as all the agents can solve their subproblems over $x^i$'s in parallel once $\bar{x}$ is fixed; and once $x^i$'s are updated and fixed, the subproblems over $\bar{x}$ can also be solved in parallel. 

In fact, for any constrained optimization problem, not necessarily a network flow type problem, if distributed computation is considered, variables of the centralized problem need to be grouped into variables $x^i$ in a distributed formulation for agents $i$ according to the decision structure, and duplicate variables $\bar{x}$ need to be introduced to decouple the constraints from agents. In this way, problem \eqref{formulation: generic} provides a general formulation for distributed computation of constrained optimization problems. Conversely, due to the necessity of duplicating variables, any distributed formulation of a constrained program \emph{necessarily} shares some key structures of \eqref{formulation: generic}. In particular, problem \eqref{formulation: generic} has two simple but crucial properties. Namely, 
\begin{itemize}
	\item Property 1: As the matrices $A$ and $B$ are defined in \eqref{eq:dupSchemeParallel}, the image of $A$ strictly contains the image of $B$, i.e. $\image(A) \supsetneq \image(B)$. 
	\item Property 2: Each agent $i$ may face local nonconvex constraints $\mathcal{X}_i$.
\end{itemize}

Property 1 follows from the fact that, for any given value of $\bar{x}_j$ in \eqref{eq:dupSchemeParallel}, there is always a feasible solution $(x_j, x^i_j)$ that satisfies the equalities in \eqref{eq:dupSchemeParallel}, but if $x_j\ne x^i_j$, then there does not exist an $\bar{x}_j$ that satisfies both equalities in \eqref{eq:dupSchemeParallel}.  Property 2 follows from our desire to decompose the computation for different agents.

	
In this paper, we will show that the above two properties of distributed constrained optimization pose a significant challenge to the theory and practice of existing distributed optimization algorithms. In particular, existing distributed algorithms based on the alternating direction method of multipliers (ADMM) may fail to converge for the general nonconvex constrained problem \eqref{formulation: generic} without further reformulation or relaxation. 
Before proceeding, we summarize our contributions.

\subsection{Summary of Contributions}
The contributions of the paper can be summarized below.

Firstly, we propose a new reformulation and a two-level distributed algorithm for solving nonconvex constrained optimization problem \eqref{eq:networkExample}-\eqref{eq:dupSchemeParallel}, 
which embeds a specially structured three-block ADMM at the inner level in an augmented Lagrangian method (ALM) framework. The proposed algorithm maintains the flexibility of ADMM in achieving distributed computation.

Secondly, we prove global and local convergence as well as iteration complexity results for the proposed two-level algorithm, and illustrate that the underlying algorithmic framework can be extended to more complicated nonconvex multi-block problems. {For the convergence of ADMM, we allow each nonconvex subproblem to be solved to a stationary point with certain improvement in the objective function compared to the previous iterate, which mildly relaxes the global optimality of nonconvex subproblems commonly assumed in the ADMM literature. Our convergence analysis builds on the classical and recent works on ADMM and ALM, and our results are derived by relating these two methods in an analytical way.}

Thirdly, we provide extensive computational tests of our two-level algorithm on nonconvex network flow problems, parallel minimization of nonconvex functions over compact manifolds, and a robust tensor PCA problem from machine learning. Numerical results demonstrate the advantages of the proposed algorithm over existing ones, including randomized updates, modified ADMM, and centralized solver, either in the convergence speed to close optimality gap, or to close feasibility gap, or both. Moreover, our test result on the multi-block robust tensor PCA problem suggests that the proposed two-level algorithm not only ensures convergence for a wider range of applications where ADMM may fail, but also tends to accelerate ADMM on problems where convergence of ADMM is already guaranteed.

\subsection{Notation}
Throughout this paper, we use $\Z_{+}$ (resp. $\Z_{++}$) to denote the set of nonnegative (resp. positive) integers, and $\R^n$ to denote the $n$-dimensional real Euclidean space. For $x,y\in \R^n$, the inner product is denoted by $x^\top y$ or $\langle x, y \rangle$; the Euclidean norm is denoted by $\|x\| = \sqrt{\langle x, x\rangle}$. A vector $x$ may consist of $J$ subvectors $x_j\in \R^{n_j}$ with $\sum_{j=1}^J n_j=n$; in this case, we will write $x = [\{x_j\}_{j\in [J]}]$, where $[J] = \{1,\cdots, J\}$. Occasionally, we use $x_i$ to denote the $i$-th component of $x$ if there is no confusion to do so. 
\rev{For a matrix $A$, denote its largest singular value by $\|A\|$ and image space by $\mathrm{Im}(A)$.}
We use $B_r(x)$ to denote the Euclidean ball centered at $x$ with radius $r>0$.
For a closed set $C\subset \R^n$, the interior of $C$ is denoted by $\mathbf{Int}~C$, the projection operator onto $C$ is denoted by $\mathrm{Proj}_C(x)$, and the indicator function of $C$ is denoted by $\mathbb{I}_C(x)$, which takes value 0 if $x\in C$ and $+\infty$ otherwise. 

The rest of this paper is organized as follows. In Section \ref{sec:literature}, we review the literature and summarize two conditions that are crucial to the convergence of ADMM, which are essentially contradicting to Properties 1 and 2. In Section \ref{sec:Reformulation}, we propose our new reformulation and a two-level algorithm for solving problem \eqref{formulation: generic} in a distributed way. 
In Section \ref{section: Convergence}, we provide the global convergence as well as the iteration complexity result, and show our scheme can be applied to more complicated multi-block problems. Then in Section \ref{section: Local}, we show the local convergence result under standard second-order assumptions. Finally, we present computational results in Section \ref{section: Examples} and conclude in Section \ref{section: Conclusion}.

\section{Related Literature} \label{sec:literature}
In this section, we review the literature on ADMM and other distributed algorithms, and identify some limitations of the standard ADMM approach in solving problem \eqref{formulation: generic}.

\subsection{Earlier Works and ADMM for Convex Problems}
ALM and the method of multipliers (MoM) were proposed in the late 1960s by Hestenes \cite{hestenes1969multiplier} and Powell \cite{powell1967method}. ALM enjoys more robust convergence properties than dual decomposition \cite{bertsekas1973convergence,rockafellar1973multiplier}, and convergence for partial elimination of constraints has been studied \cite{bertsekas2014constrained}. ADMM was proposed by Glowinski and Marrocco \cite{glowinski1975approximation} and Gabay and Mercier \cite{gabay1976dual} in the mid-1970s, and has deep roots in maximal monotone operator theory and numerical algorithms for solving partial differential equations \cite{eckstein1992douglas,douglas1956numerical,peaceman1955numerical}. ADMM solves the subproblems in ALM by alternately optimizing through blocks of variables and in this way achieves distributed computation. 
The convergence of ADMM with two block variables is proved for convex optimization problems \cite{glowinski1975approximation,gabay1976dual,gabay2024applications,eckstein1992douglas} and the $\mathcal{O}(1/k)$ convergence rate is established \cite{he20121,monteiro2013iteration,he2015non}. 
Some applications in distributed consensus problems include \cite{boyd2011distributed,wei_distributed_2012,shi_linear_2014,makhdoumi_broadcast-based_2014,aus_ozdaglar_distributed_2015,makhdoumi_convergence_2016}. More recent convergence results on multi-block convex ADMM can be found in \cite{he_alternating_2012,he2012convergence,han2012note,chen2013convergence,lin2015sublinear,li2015convergent,lin2015global,lin2016iteration,chen2016direct,hong2017linear,davis2017three,lin2018global}.

\subsection{ADMM for Nonconvex Problems}
The convergence of ADMM has been observed for many nonconvex problems with various applications in matrix completion and factorization \cite{xu2012alternating,shen2014augmented,zhang2014asynchronous,yang2015alternating}, optimal power flow \cite{sun2013fully,erseghe2014distributed,magnusson2015distributed}, asset allocation \cite{wen2013asset}, and polynomial optimization \cite{jiang2014alternating}, among others. 
For convergence theory, several conditions have been proposed to guarantee convergence on structured nonconvex problems that can be abstracted in the following form
	\begin{align}\label{eq:genericADMM}
		\min_{x_1,\cdots, x_p, z}~&\sum_{i=1}^p f_i(x_i) + h(z) + g(x_1,\cdots,x_p,z)\\
		~\mathrm{s.t.}~&\sum_{i=1}^pA_ix_i + Bz = b,~ x_i\in \mathcal{X}_i~\forall i\in [p]. \notag
	\end{align}
We summarize some convergence conditions in Table \ref{table:nonconvexADMM}. For instance, Hong et al. \cite{hong2016convergence} studied ADMM for nonconvex consensus and sharing problems under cyclic or randomized update order. Li and Pong \cite{li2015global} and Guo et al. \cite{guo2017convergence} studied two-block ADMM, where one of the blocks is the identity matrix. One of the most general frameworks for proving convergence of multi-block ADMM is proposed by Wang et al. \cite{wang2015global}, where the authors showed a global subsequential convergence with a rate of $o(1/\sqrt{k})$. A more recent work by Themelis and Patrinos \cite{themelis2018douglas} established a primal equivalence of nonconvex ADMM and Douglas-Rachford splitting.
    
Another line of research explores some variants of ADMM. Wang et al. \cite{wang2014convergence,wang2015convergence} studied the nonconvex Bregman-ADMM, where a Bregman divergence term is added to the augmented Lagrangian function during each block update to facilitate the descent of certain potential function. Gon{\c{c}}alves, Melo, and Monteiro \cite{gonccalves2016extending} provided an alternative convergence rate proof of proximal ADMM applied to convex problems, which was shown to be an instance of a 
more general non-Euclidean hybrid proximal extragradient framework. The two-block, multi-block, and Jacobi-type extensions of this framework to nonconvex problems can be found in \cite{gonccalves2017convergence,melo2017iteration,melo2017jacobi}, where an iteration complexity of $\mathcal{O}(1/\sqrt{k})$ was also established. Jiang et al. \cite{jiang2019structured} proposed two variants of proximal ADMM. Some proximal terms are added to the first $p$ block updates; for the last block, either a gradient step is performed, or a quadratic approximation of the augmented Lagrangian is minimized. 
	
\begin{table}[h!]
	{\footnotesize
		\caption{Comparisons of the Nonconvex ADMM Literature}\label{table:nonconvexADMM}
	\begin{center}
	\begin{tabular}{|c|c|c|c|c|c|c|c|}
		\hline
		 	  & $p$ & $f_i$'s & $\mathcal{X}_i$'s & $h$ & $g$ & $A_i$'s & $B$ \\
		\hline
		\multirowcell{3}{\cite{hong2016convergence}} & 1        & convex  & convex  & smooth   & -&  -& $I$   \\
		\cline{2-8}
				& \multirowcell{2}{$\geq 2$} & convex  & \multirowcell{2}{convex} & \multirowcell{2}{smooth}   & \multirowcell{2}{-}&  \multirowcell{2}{full col.}& \multirowcell{2}{$I$}  \\
		\cline{3-3}
								   &  & smooth nonconvex  &  &    & & &   \\

		\hline
		\cite{li2015global}       & 1       &\multicolumn{2}{c|}{l.s.c}  &$\nabla^2h$ bounded &-& $I$ & full row   \\
		\hline   
		\cite{guo2017convergence} & 1       &\multicolumn{2}{c|}{l.s.c}  & smooth & - & full col. & $I$ \\
		\hline    
		\cite{wang2014convergence}     & 1      &\multicolumn{2}{c|}{l.s.c \& $f_1+h$ subanalytic}   & smooth & - & full col. & full row \\
		\hline    
		\cite{wang2015convergence}     & 2      &\multicolumn{2}{c|}{l.s.c \&  $f_1+f_2+h$ subanalytic}   & smooth & - & - & full row \\
		\hline 
		\multirowcell{2}{\cite{wang2015global}}     &\multirowcell{2}{$\geq2$}  &\multicolumn{2}{c|}{l.s.c \& restricted prox-regular} &\multicolumn{2}{c|}{\multirowcell{2}{smooth}} &\multicolumn{2}{c|}{$\image([A,b]) \subseteq \image(B)$} \\
		\cline{3-4}
			    & &\multicolumn{2}{c|}{$\partial f_1$ bounded \& $f_{>1}$'s p.w. linear } &\multicolumn{2}{c|}{} &\multicolumn{2}{c|}{Lip. sub-min path} \\
		\hline
		\cite{gonccalves2017convergence,melo2017iteration}   &$1,2$ &\multicolumn{2}{c|}{l.s.c} &  $\approx$smooth & - &\multicolumn{2}{c|}{$\image([A,b]) \subseteq \image(B)$}\\
		\hline
		\cite{melo2017jacobi}   &$\geq 2$ &\multicolumn{2}{c|}{l.s.c} & smooth & - &\multicolumn{2}{c|}{$\image([A,b]) \subseteq \image(B)$}\\
		\hline
		\multirowcell{2}{\cite{jiang2019structured}} &\multirowcell{2}{$\geq2$} & Lipschitz continuous & compact &\multicolumn{2}{c|}{\multirowcell{2}{smooth}} & \multirowcell{2}{-} & $I$ \textbf{or} \\ 
		\cline{3-4}
			&  &\multicolumn{2}{c|}{l.s.c}   &\multicolumn{2}{c|}{} &  & full row\\
		\hline
		\multicolumn{8}{c}{l.s.c: lower semi-continuous; smooth: Lipschitz differentiable; full col./row: full column/row rank}\\
	\end{tabular}
	\end{center}}	
\end{table}

For general nonconvex and \rev{nonsmooth} problems, we note that the convergence of ADMM relies on the following two conditions.
\begin{itemize}
	\item Condition 1: Denote $A:=[A_0, \cdots, A_p]$, then 
		$\image([A,b])$ $\subseteq \image(B)$.
	\item Condition 2: The last block objective function $h(z)$ is Lipschitz differentiable.
\end{itemize}
Due to the sequential update order of ADMM, $z^{k}$ is obtained after $x^{k}$ is calculated. If Condition 1 on the images of $A$ and $B$ is not satisfied, then it is possible that $x^k$ converges to some $x^*$ such that there is no $z^*$ satisfying $Ax^*+Bz^*=b$. In addition, Condition 2 provides a way to control dual iterates by primal iterates via the optimality condition of the $z$-subproblem. This relation requires unconstrained optimality condition of $z$-update, so the last \rev{block} variable $z$ cannot be constrained elsewhere. See also \cite{wang2015global} for some relevant discussions. As indicated from Table \ref{table:nonconvexADMM}, these two conditions (and their variants) are almost necessary for ADMM to converge in the absence of convexity. We also note that, even for convex problems, these two conditions are used to relax the strong convexity assumption in the objective \cite{lin2016iteration} or accelerate ADMM with $\mathcal{O}(1/k^2)$ iteration complexity \cite{tian2019alternating}.

It turns out that the two conditions and the two properties we mentioned in Section \ref{subsub: property} may conflict each other. By Property 1, the image of $A$ strictly constrains the image of $B$, so by Condition 1, we should update local variables after the global variable in each ADMM iteration to ensure feasibility. However, by Property 2, each local variable is subject to some local constraints, so Condition 2 cannot be satisfied; {technically speaking, we cannot utilize the unconstrained optimality condition of the last block to link primal and dual variables, which again makes it difficult to ensure primal feasibility of the solution}. When ADMM is directly applied to nonconvex problems, divergence is indeed observed \cite{sun2013fully,magnusson2015distributed,wang2015global}. {As a result, for many applications in the form of \eqref{formulation: generic} where the above two conditions are not available, the ADMM framework cannot guarantee convergence.}
	
After completing a draft of this paper, we were informed of a ADMM-based approach in \cite{jiang2019structured}, where the authors proposed to solve the \emph{relaxed} problem of \eqref{formulation: generic} 
	\begin{align}\label{eq: relaxed}
		\min_{x\in \mathcal{X}, \bar{x}\in \tilde{\mathcal{X}},z} \quad & f({x}) + \frac{\beta(\epsilon)}{2}\|z\|^2\quad \mathrm{s.t.}\quad
		 A{x}+B\bar{x}+z=0. 	 
	\end{align} 
Notice first that, as proved in \cite{jiang2019structured}, in order to achieve a desired feasibility  with $\|Ax+B\bar{x}\| = \mathcal{O}(\epsilon)$, the coefficient $\beta(\epsilon)$ and ADMM penalty need to be as large as $\mathcal{O}(1/\epsilon^2)$. Such large parameters may lead to slow convergence and large optimality gaps. Also notice that, applying ADMM to \eqref{eq: relaxed} may produce an approximate stationary solution to \eqref{formulation: generic}, even when the problem is infeasible to begin with.
{As we will show in Section \ref{section: Convergence}, our proposed two-level algorithm is able to achieve the same order of iteration complexity as the reformulation \eqref{eq: relaxed} and the one-level ADMM approach proposed in \cite{jiang2019structured}, and meanwhile the proposed two-level algorithm provides information on ill conditions and infeasibility; in Section \ref{section: Examples}, we empirically demonstrate with computation that the proposed algorithm robustly converges on large-scale constrained nonconvex programs with a faster speed and obtains solutions with higher qualities.}
	
\subsection{Other Distributed Algorithms}
Some other distributed algorithms not based on ADMM are also studied in the literature. Hong \cite{hong2016decomposing} introduced a proximal primal-dual algorithm for distributed optimization problems, where a proximal term is added to cancel out cross-product terms in the augmented Lagrangian function. Lan and Zhou \cite{lan2018random} proposed a randomized incremental gradient algorithm for a class of convex problems over a multi-agent network. Lan and Yang \cite{lan2018accelerated} proposed accelerated stochastic algorithms for nonconvex finite-sum and multi-block problems; interestingly, the analysis for the multi-block problem also requires the last block variable to be unconstrained with an invertible coefficient matrix and a Lipschitz differentiable objective, which further confirms the necessity of Conditions 1 and 2. We end this subsection with a recent work by Shi et al. \cite{shi2017penalty}. They studied the problem
\begin{align}\label{eq:pdd}
	\min_{\mathbf{x}, \mathbf{y}}~~f(\mathbf{x}, \mathbf{y}) + \sum_{j=1}^m \tilde{\phi}_j(\mathbf{y}_j)~~ \mathrm{s.t.}~~ h(\mathbf{x}, \mathbf{y}) = 0,~~ g_i(\mathbf{x}_i)\leq 0,~~\mathbf{x}_i\in \mathcal{X}_i~~ \forall i \in [n].
\end{align}
The variables $\mathbf{x}$ and $\mathbf{y}$ are divided into $n$ and $m$ subvectors, respectively. $f(\mathbf{x}, \mathbf{y})$, $h(\mathbf{x}, \mathbf{y})$, $g_i(\mathbf{x}_i)$ are continuously differentiable, $\tilde{\phi}_j(\mathbf{y}_j)$ is a composite function, and $\mathcal{X}_i$'s are convex. The authors proposed a doubly-looped penalty dual decomposition method (PDD). The overall algorithm used the ALM framework, where the coupling constraint $h(\mathbf{x}, \mathbf{y})=0$ is relaxed and each ALM subproblem is solved by a randomized block update scheme. We note that randomization is crucial in their convergence analysis, \rev{and a deterministic implementation of the inner-level algorithm for solving the ALM subproblem may not converge  when nonconvex functional constraints are present.}

\section{A Key Reformulation and A Two-level Algorithm}\label{sec:Reformulation}

We say $({x}^*, \bar{x}^*, y^*)\in \R^{n_1}\times\R^{n_2}\times\R^{m}$ is a 
\textit{stationary point} of problem \eqref{formulation: generic} if it satisfies the following condition
\begin{subequations}\label{condition: stationary}
	\begin{align}
	& 0\in \nabla f({x}^*)+ A^\top y^* + N_{\mathcal{X}}({x}^*), \label{condition: stationary1}\\
	& 0 \in B^\top y^* + N_{\bar{\mathcal{X}}}(\bar{x}^*),\label{condition: stationary2}\\
	& 0 = A{x}^*+B\bar{x}^*;\label{condition: stationary3}
	\end{align}
\end{subequations}
or equivalently, $0\in \partial L({x}^*, \bar{x}^*, y^*)$, where
\begin{equation}
L({x}, \bar{x}, y) := f(x) + \mathbb{I}_{\mathcal{X}}({x}) + \mathbb{I}_{\bar{\mathcal{X}}}(\bar{x})+ \langle y, A{x} + B\bar{x}\rangle.\label{eq: optcondtion}
\end{equation}
In equations \eqref{condition: stationary} and \eqref{eq: optcondtion}, the notation 
$N_{\mathcal{X}}(x)$ denotes the general normal cone of $\mathcal{X}$ at $x\in \mathcal{X}$ 
\cite[Def 6.3]{rockafellar2009variational}, and $\partial L(\cdot)$ denotes the general 
subdifferential of $L(\cdot)$ \cite[Def 8.3]{rockafellar2009variational}. Some properties and 
calculus rules of normal cones and the general subdifferential can be found in \cite[Chap 6, 8, 10]{rockafellar2009variational}. 

It can be shown that if $({x}^*, \bar{x}^*)$ is a local minimum of \eqref{formulation: generic} and satisfies some mild regularity condition, then condition \eqref{condition: stationary} is satisfied \cite[Thm 8.15]{rockafellar2009variational}. If $\mathcal{X}$ and $\bar{\mathcal{X}}$ are defined by finitely many continuously differentiable constraints, then condition \eqref{condition: stationary} is equivalent to the well-known KKT condition of problem \eqref{formulation: generic} under some constraint qualification. Therefore, condition \eqref{condition: stationary} can be viewed as a generalized first-order necessary optimality condition for nonsmooth constrained problems. Our goal is to find such a stationary point $({x}^*, \bar{x}^*, y^*)$ for problem \eqref{formulation: generic}.

\subsection{A Key Reformulation}
As analyzed in the previous section, since directly applying ADMM to a distributed formulation of the general constrained nonconvex problem \eqref{formulation: generic} cannot 
guarantee convergence without using the relaxation scheme in \cite{jiang2019structured}, we want to go beyond the standard ADMM framework. We propose two steps for achieving this. The first step is taken in this subsection to propose a new reformulation, and the 
second step is taken in the next subsection to propose a new two-level algorithm for the new reformulation.

We consider the following reformulation of \eqref{formulation: generic}
\begin{align}\label{formulation: slack}
	\min_{ x\in \mathcal{X}, \bar{x} \in \bar{\mathcal{X}},z} \quad f(x) \quad 	\mathrm{s.t.} \quad A{x} + B\bar{x} +z = 0, ~z=0.
\end{align}
The idea of adding a slack variable $z\in \R^m$ has two consequences. The first consequence is that the linear coupling constraint $Ax+B\bar{x}+z=0$ now has three blocks, and the last block is an identity matrix $I_m$, whose image is the whole space. {Given any ${x}$ and $\bar{x}$, we can always let $z = -Ax-B\bar{x}$ to make the constraint satisfied. The second consequence is that the artificial constraint $z=0$ can be treated separately from the coupling constraint.} Notice that a direct application of ADMM to problem \eqref{formulation: slack} still does not guarantee convergence since Conditions 1 and 2 are not satisfied yet.
So it is necessary to separate the linear constraints into two levels. If we ignore $z=0$ for the moment, existing techniques in ADMM analysis can be applied to the rest of the problem. Since we want to utilize the unconstrained optimality condition of the last block, we can relax $z=0$. This observation motivates us to choose ALM. To be more specific, consider the problem
\begin{align}\label{formulation: inner}
	\min_{ x\in \mathcal{X}, \bar{x} \in \bar{\mathcal{X}},z} \quad f(x)+\langle \lambda^k, z\rangle+ \frac{\beta^k}{2}\|z\|^2\quad \mathrm{s.t.} \quad A{x}+B\bar{x}+z=0,
\end{align}
which is obtained by dualizing constraint $z=0$ with  $\lambda^k\in \R^m$ and adding a quadratic penalty $\frac{\beta^k}{2}\|z\|^2$ with $\beta^k>0$. The augmented Lagrangian term $\langle \lambda^k, z\rangle+ \frac{\beta^k}{2}\|z\|^2$ can be viewed as an objective function in variable $z$, which is not only Lipschitz differentiable but also strongly convex. Problem \eqref{formulation: inner} can be solved by a three-block ADMM in a distributed fashion when a separable structure 
is available. Notice that the first-order optimality condition of problem \eqref{formulation: inner} at a stationary solution $({x}^{k}, \bar{x}^{k}, z^{k}, y^{k})$ is  
\begin{subequations}\label{condition: innerStationary}
	\begin{align}
	& 0\in \nabla f(x^{k}) +A^\top y^{k}+ N_{\mathcal{X}}({x}^{k}),\label{condition: innerStationary1}\\
	& 0\in B^\top y^{k} +N_{\bar{\mathcal{X}}}(\bar{x}^{k}), \label{condition: innerStationary2}\\
	& 0 =\lambda^k+\beta^kz^{k} +y^{k}, \label{condition: innerStationary3}\\
	& 0 = A{x}^{k}+B\bar{x}^{k} +z^{k}. \label{condition: innerStationary4}
	\end{align}
\end{subequations}
However, such a solution may not satisfy primal feasibility $Ax+B\bar{x}=0$, which is the only difference from the optimality condition \eqref{condition: stationary} (note that \eqref{condition: innerStationary3} is analogous to the dual feasibility in variable $z$ in the KKT condition). Fortunately, the ALM offers a scheme to drive the slack variable $z$ to zero by updating $\lambda$
and we can expect iterates to converge to a stationary point of the original problem \eqref{formulation: generic}. In summary, reformulation \eqref{formulation: slack} separates the complication of the original problem into two levels, where the inner level \eqref{formulation: inner} provides a formulation that simultaneously satisfies Conditions 1 and 2, and the outer level drives $z$ to zero. We propose a two-level algorithmic architecture in the next subsection to realize this.
\subsection{A Two-level Algorithm}
The proposed algorithm consists of two levels, both of which are based on the augmented Lagrangian framework. The inner-level algorithm is described in  Algorithm \ref{alg: inner}, which uses a three-block ADMM to solve problem \eqref{formulation: inner} and its iterates are indexed by $t$. The outer-level algorithm is described in Algorithm \ref{alg: outer} with iterates indexed by $k$.

Given $\lambda^k\in \R^m$ and $\beta^k>0$, the augmented Lagrangian function associated with the $k$-th inner-level problem \eqref{formulation: inner} is defined as 

\begin{align}
L_{\rho^k}({x},\bar{x}, z,y) :=& f(x) + \mathbb{I}_{\mathcal{X}}({x}) + \mathbb{I}_{\bar{\mathcal{X}}}(\bar{x}) + \langle\lambda^k, z\rangle+\frac{\beta^k}{2}\|z\|^2  \notag \\
 & + \langle y, A{x}+B\bar{x}+z\rangle + \frac{\rho^k}{2}\|A{x}+B\bar{x}+z\|^2,\label{eq: Lyap}
\end{align}
where $y \in \R^{m}$ is the dual variable for constraint $Ax+B\bar{x}+z=0$ and $\rho^k$ is a penalty parameter for ADMM.
{
In view of \eqref{condition: innerStationary}, the $k$-th inner-level ADMM aims to find an approximate stationary solution $(x^k, \bar{x}^k, z^k, y^k)$ of \eqref{formulation: inner}  in the sense that there exist $d_1^k$, $d_2^k$, and $d_3^k$ such that 
\begin{subequations}\label{eq: appStationary}
	\begin{align}
	& d^{k}_1\in \nabla f({x}^k) +A^\top y^{k}+N_{\mathcal{X}}({x}^k), \label{eq: appStationary1}\\
	& d^k_2 \in B^\top y^k+ N_{\bar{\mathcal{X}}}(\bar{x}^k), \label{eq: appStationary2}\\
	& 0 = \lambda^k + \beta^kz^k + y^k, \label{eq: appStationary3}\\
	& d^k_3 = A{x}^{k}+B\bar{x}^{k}+z^{k}, \label{eq: appStationary4}\\
	& \|d_i^k\|\leq \epsilon_i^k, \ \forall i\in [3],
	\end{align}
\end{subequations}
where $\epsilon_i^k$'s are positive tolerances. The optimality conditions of $x^{t}$ in Line \ref{a2: firstblock} and $\bar{x}^t$ in Line \ref{a2: secondblock} of Algorithm \ref{alg: inner} read:
\begin{align*}
	0 \in  & \nabla f(x^{t}) + A^\top y^{t-1} + \rho^k A^\top (Ax^{t} + B\bar{x}^{t-1} + z^{t-1}) + N_{\mathcal{X}}(x^t), \\
	0 \in  & B^\top y^{t-1} + \rho^k B^\top (Ax^{t} + B\bar{x}^{t} + z^{t-1}) + N_{\bar{\mathcal{X}}}(\bar{x}^t).
\end{align*}
With the dual update in Line \ref{a2: dual}, we can see that
\begin{align*}
	-\rho^k A^\top (B\bar{x}^{t-1}+z^{t-1}-B\bar{x}^{t}-z^{t}) \in &   \nabla f(x^{t}) + A^\top y^{t} +  N_{\mathcal{X}}(x^t), \\
	-\rho^k B^\top(z^{t-1}-z^{t})\in & B^\top y^t+ N_{\bar{\mathcal{X}}}(\bar{x}^t).
\end{align*}
As a result, Algorithm \ref{alg: inner} can be terminated if it finds $({x}^{t}, \bar{x}^{t}, z^{t})$ such that 
\begin{subequations}\label{stopping criteria}
	\begin{align} 
	\|\rho^k A^\top (B\bar{x}^{t-1}+z^{t-1}-B\bar{x}^{t}-z^{t})\| &\le \epsilon^{k}_1,\label{stopping criteria1}\\
	\|\rho^k B^\top(z^{t-1}-z^{t}) \|&\le \epsilon^{k}_2,\label{stopping criteria2}\\
	 \|A{x}^{t}+ B\bar{x}^{t}+z^{t}\|&\le \epsilon^{k}_3.\label{stopping criteria3}
	\end{align}
\end{subequations}
}Notice that $\rho^k$ does not appear in \eqref{stopping criteria3}, so we can use different tolerances for the above three measures. Since \eqref{eq: appStationary3} is always maintained by ADMM with \rev{$(y^k, z^k) = (y^t, z^t)$}, a solution satisfying \eqref{stopping criteria} is an approximate stationary solution to problem \eqref{formulation: inner} \rev{by assigning $(x^k, \bar{x}^k, z^k, y^k) := (x^t, \bar{x}^t, z^t, y^t)$.}
\begin{algorithm}[!htb]
	\caption{: The $k$-th inner-level ADMM}\label{alg: inner}
	\begin{algorithmic}[1]
		\STATE \rev{\textbf{Input} $(\lambda^k, \beta^k, \epsilon^{k}_1, \epsilon^{k}_2, \epsilon^{k}_3) \in \R^m \times \R^4_{++}$;}
		\STATE \rev{\textbf{initialize} $(x^0, \bar{x}^0, z^0, y^0)\in \mathcal{X} \times \bar{\mathcal{X}} \times \R^{m} \times \R^{m}$ with $\lambda^k+\beta^kz^0 +y^0=0$, $\rho^k = 2\beta^k$;}
		\FOR{\rev{$t = 1, 2, 3, \cdots$} }
 		\STATE /* First block update (parallelize over subvectors of ${x}$) */
		\STATE obtain a stationary $x^{t}$ such that $0\in \partial_x
		L_{\rho^k}({x}^{t},\bar{x}^{t-1},z^{t-1}, y^{t-1})$;\label{a2: firstblock}
 		\STATE /* Second block update (parallelize over components of $\bar{x}$) */
		\STATE $\bar{x}^{t} \gets \argmin_{\bar{x}}L_{\rho^k}({x}^{t}, \bar{x}, z^{t-1}, y^{t-1})$;\label{a2: secondblock}
 		\STATE /* Third block update (parallelize over subvectors of $z$) */
		\STATE $z^{t} \gets \argmin_{z}L_{\rho^k}({x}^{t}, \bar{x}^{t}, z, y^{t-1});$\label{a2: thirdblock}
 		\STATE /* Inner dual update (parallelize over subvectors of $y$) */
		\STATE $y^{t} \gets y^{t-1} +\rho^k(A{x}^{t}+ B\bar{x}^{t}+z^{t})$;\label{a2: dual}
		\IF{\rev{stopping criteria \eqref{stopping criteria} is satisfied}}
		\STATE  \rev{\textbf{return} $(x^t, \bar{x}^t, z^t, y^t)$;}
		\STATE \rev{\textbf{break}.}
		\ENDIF
		\ENDFOR
	\end{algorithmic}
\end{algorithm}

The first block update in Algorithm \ref{alg: inner} reads as
\begin{equation}\label{formulation: firstblock}
\min_{{x}\in \mathcal{X}} ~~f({x}) + \langle y^{t-1}, A{x}+B\bar{x}^{t-1}+z^{t-1}\rangle + \frac{\rho^k}{2}\|A{x}+B\bar{x}^{t-1}+z^{t-1}\|^2,
\end{equation}
so line \ref{a2: firstblock} of Algorithm \ref{alg: inner} searches for a stationary solution ${x}^{t}$ of the constrained problem \eqref{formulation: firstblock}. The second and third block updates in lines \ref{a2: secondblock} and \ref{a2: thirdblock} admit closed form solutions, so in view of the network flow problem \eqref{eq:networkExample}, the proposed reformulation \eqref{formulation: slack} does not introduce additional computational burden. All primal and dual updates in Algorithm \ref{alg: inner} can be implemented in parallel as $f$ and $\mathcal{X}$ admit separable structures. In each ADMM iteration, agents solve their own local problems independently and only need to communicate with their immediate neighbors. We resolve this by updating $\lambda$ and $\beta$, which is referred as outer-level iterations indexed by $k$ in Algorithm \ref{alg: outer}. 

\begin{algorithm}[!htb]
	\caption{: Outer-level ALM}\label{alg: outer}
	\begin{algorithmic}[1]
		\STATE \textbf{Initialize} \rev{$\lambda^1\in [\underline{\lambda}, \overline{\lambda}]$ where $\underline{\lambda}, \overline{\lambda}\in \R^{m}$ and $\overline{\lambda}-\underline{\lambda}\in \R^{m}_{++}$,  $\beta^1 = \beta^0 \gamma$ for some $\beta^0\geq \frac{1}{4}$ and $\gamma >1$, $\omega\in [0,1)$, $\{\epsilon^k_i\} \subset \R_{+}$ with $\epsilon^k_i \rightarrow 0$ for $i\in[3]$;}
		\FOR{\rev{$k=1,2,3,\cdots$}}
		\STATE \rev{obtain $({x}^{k},\bar{x}^{k},z^{k},y^{k})$ from Algorithm \ref{alg: inner} with input $(\lambda^k, \beta^k, \epsilon^{k}_1, \epsilon^{k}_2, \epsilon^{k}_3)$};
		\STATE $\lambda^{k+1} \gets \mathrm{Proj}_{[\underline{\lambda}, \overline{\lambda}]}\big(\lambda^{k} + \beta^k z^{k})$;
		\IF{$\|z^{k}\|\leq \omega\|z^{k-1}\|$}
		\STATE $\beta^{k+1} \gets \beta^k$,
		\ELSE
		\STATE $\beta^{k+1} \gets \gamma \beta^k$;
		\ENDIF
		\ENDFOR
	\end{algorithmic}
\end{algorithm}

In Algorithm \ref{alg: outer}, we choose some predetermined bounds $[\underline{\lambda}, \overline{\lambda}]$ and explicitly project the ``true'' dual variable $\lambda^k +\beta^k z^k$ onto this hyper-cube to obtain $\lambda^{k+1}$ used in the next outer iteration. Such safeguarding technique is essential to establish the global convergence of ALM \cite{andreani2007augmented,luo2008convergence}. We increase the outer-level penalty $\beta^k$ if there is no significant improvement in reducing $\|z^k\|$.

Before proceeding to the next section, we note that the key reformulation \eqref{formulation: slack} is inspired by the hope to reconcile the conflict between the two properties and the two condition so that ADMM can be applied. The introduction of additional variable $z$ is not necessary in the sense that any method that achieves distributed computation for the subproblem
\begin{align}\label{eq: alter_subproblem}
\min_{x\in \mathcal{X}, \bar{x} \in \bar{\mathcal{X}}} f(x)+ \langle \lambda^k,  Ax+B\bar{x} \rangle + \frac{\beta^k}{2}\|Ax+B\bar{x}\|^2
\end{align}
can be embedded inside the ALM framework. The aforementioned PDD method \cite{shi2017penalty} is such an approach. There are some other update schemes \cite{bolte2014proximal,xu2017globally} that can handle functional constraints in \eqref{eq: alter_subproblem}, {assuming that the (Euclidean) projection oracle onto the nonconvex set $\mathcal{X}$ is available.} It would be interesting to compare their performances with ADMM when used in the inner level, and we leave this to future work. {Meanwhile, as we will demonstrate in Section \ref{section: Examples}, the proposed two-level algorithm preserves the desirable properties of ADMM in practice, such as fast convergence in early stages and scalability to handle large-scale problems.}


\section{Global Convergence}\label{section: Convergence}
In this section, we prove global convergence and convergence rate of the proposed two-level algorithm. Starting from any initial point, iterates generated by the proposed algorithm have a limit point; every limit point is a stationary solution to the original problem under some mild condition. In particular, we make the following assumptions.
\begin{assumption}\label{assumption: feasible}
	Problem \eqref{formulation: slack} is feasible and the set of stationary points satisfying \eqref{condition: stationary} is nonempty.
\end{assumption}
\begin{assumption}\label{assumption:compact}
{
	The objective function $f: \R^n\rightarrow \R$ is continuously differentiable, $\mathcal{X}\subseteq\R^n$ is a compact set, and $\bar{\mathcal{X}}$ is convex and compact.}
\end{assumption}
\begin{assumption}\label{assumption: descentOverFirstBlock}
	Given $\lambda^k$, $\beta^k$, and $\rho^k$, the first block update can find a stationary solution $x^t$ such that 
	$0 \in \partial_x L_{\rho^k} (x^t, \bar{x}^{t-1}, z^{t-1},y^{t-1} )$ and
	$$L_{\rho^k} ({x}^{t}, \bar{x}^{t-1},z^{t-1}, y^{t-1}) \leq L_{\rho^k}( {x}^{t-1}, \bar{x}^{t-1},z^{t-1},y^{t-1})< +\infty$$ for all $t\in \Z_{++}$.
\end{assumption}

We give some comments below. Assumption \ref{assumption: feasible} ensures the feasibility of problem \eqref{formulation: slack}, which is standard. Though it is desirable to design an algorithm that can guarantee feasibility of the limit point, usually this is too much to ask: the powerful ALM may converge to an infeasible limit point even if the original problem is feasible. If this situation happens, or problem \eqref{formulation: slack} is infeasible in the first place, our algorithm will converge to a limit point that is stationary to some problem, as stated in Theorem \ref{thm: feasible}.
The compactness required in Assumption \ref{assumption:compact} ensures that the sequence generated by our algorithm stays bounded, and can be dropped if the existence of a limit point is directly assumed or derived from elsewhere. We do not make any explicit assumptions on matrices $A$ and $B$ in this section, and our analysis does not rely on any convenient structures that $A$ and $B$ may process, such as full row or column rank.

For Assumption \ref{assumption: descentOverFirstBlock}, we note that finding a stationary point usually can be achieved at the successful termination of some nonlinear solvers. {In addition, the state-of-the-art nonlinear solver IPOPT \cite{wachter2006implementation} will accept a trial point if either the objective or the constraint violation is decreased in each iteration. In step 1 of Algorithm \ref{alg: inner}, since ${x}^{t-1}$ is already a feasible solution, if we start from ${x}^{t-1}$, it is reasonable to expect a new stationary point ${x}^{t}$ is reached with an improved objective value.}
Assumption \ref{assumption: descentOverFirstBlock} is {slightly} weaker and more realistic than assuming that the nonconvex subproblem can be solved globally, {which is commonly adopted in the nonconvex ADMM literature}.

In Section \ref{section: inner}, we show that each inner-level ADMM converges to a solution that approximately satisfies the stationary condition \eqref{condition: innerStationary} of problem \eqref{formulation: inner}. This sequence of solutions that we obtain at termination of the inner ADMM is referred as outer-level iterates. Then in Section \ref{section: outer}, we firstly characterize limit points of outer-level iterates, whose existence is guaranteed. Then we show that a limit point is stationary to problem \eqref{formulation: generic} if some mild constraint qualification is satisfied.

\subsection{Convergence of Inner-level Iterations}\label{section: inner}
In this subsection, we show that, by applying the three-block ADMM to problem \eqref{formulation: inner}, we will get an approximate stationary point $({x}^{k}, \bar{x}^{k}, z^{k}, y^{k})$ satisfying \rev{the approximate stationary condition \eqref{eq: appStationary}.} The convergence of the inner-level ADMM in this subsection uses some techniques from the literature, e.g., \cite{wang2015global}. We present a self-contained proof \rev{in the appendix} and demonstrate that the descent oracle assumed in Assumption \ref{assumption: descentOverFirstBlock} relaxes the global optimality of subproblems without affecting the overall convergence. 
\rev{
\begin{proposition}\label{prop: innerAKKT}
	Suppose Assumptions \ref{assumption:compact}-\ref{assumption: descentOverFirstBlock} hold. The $k$-th inner-level ADMM of Algorithm \ref{alg: inner} terminates, i.e., the stopping criteria \eqref{stopping criteria} is satisfied, in at most
	\begin{align}
		T_k:= \left \lceil  \frac{8 \max\{\|A\|^2, \|B\|^2, 1\}\beta^k (\overline{L}_k - \underline{L})}{\min\{\epsilon^k_1,\epsilon^k_2, \epsilon^k_3 \}^2 } \right \rceil\label{eq:Tk}
	\end{align}
	iterations, where $\overline{L}_k := L_{\rho^k}(x^0, \bar{x}^0, z^0, y^0)$ and $\underline{L}\in \R$ is a finite constant independent of outer-level index $k$. 
\end{proposition}
\begin{proof}
	See Appendix \ref{sec: proof of prop1}. \qed 	
\end{proof}
In particular, the approximate stationary condition \eqref{eq: appStationary} is satisfied with the solution returned by ADMM.}

\subsection{Convergence of Outer-level Iterations}\label{section: outer}
In this subsection, we prove the convergence of outer-level iterations. In general, when the method of multipliers is used as a global method, there is no guarantee that the constraint being relaxed can be satisfied at the limit. Due to the special structure of our reformulation, we are able to give a characterization of limit points of outer-level iterates.

\begin{theorem}\label{thm: feasible}
	Suppose Assumptions \ref{assumption:compact}-\ref{assumption: descentOverFirstBlock} hold. Let $\{({x}^k, \bar{x}^k, z^k, y^k)\}_{k\in \Z_{++}}$ be the sequence of outer-level iterates of Algorithm \ref{alg: outer} satisfying condition \eqref{eq: appStationary}. Then the sequence of the primal solutions $\{({x}^k, \bar{x}^k, z^k)\}_{k\in \Z_{++}}$ are bounded, and  every limit point $({x}^*, \bar{x}^*, z^*)$ of this sequence satisfies one of the following:
	\begin{enumerate}
		\item $({x}^*, \bar{x}^*)$ is feasible for problem \eqref{formulation: generic}, i.e., $z^*=0$;
		\item $({x}^*, \bar{x}^*)$ is a stationary point of the problem 
		\begin{align}\label{formulation: feasibility}
		\min_{{x}\in \mathcal{X}, \bar{x}\in \bar{\mathcal{X}}} \quad \frac{1}{2}\|A{x}+B\bar{x}\|^2.
		\end{align}
	\end{enumerate}
\end{theorem}
\begin{proof}
	See Appendix \ref{sec: proof of thm1}. \qed	
\end{proof}

Theorem \ref{thm: feasible} gives a complete characterization of limit points of outer-level iterates. If the limit point is infeasible, i.e. $z^*\neq 0$, then \rev{$(x^*,\bar{x}^*)$} is a stationary point of the problem \eqref{formulation: feasibility}. This is also the case if problem \eqref{formulation: generic} is infeasible, i.e. the feasible region defined by $\mathcal{X}$ and $\bar{\mathcal{X}}$ does not intersect the affine plane $Ax+B\bar{x}=0$, since each inner-level problem \eqref{formulation: inner} is always feasible and the first case in Theorem \ref{thm: feasible} cannot happen. We also note that even if \rev{$(x^*,\bar{x}^*)$} falls into the second case of Theorem \ref{thm: feasible}, it is still possible that \rev{the associated} $z^*=0$, but then $(x^*, \bar{x}^*)$ will be some irregular feasible solution. In both cases, we believe $(x^*, \bar{x}^*)$ generated by the two-level algorithm has its own significance and may provide some useful information regarding the problem structure. Since stationarity and optimality are maintained in all subproblems, we should expect that any feasible limit point of the outer-level iterates is stationary for the original problem. As we will prove in the next theorem, this is indeed the case if some mild constraint qualification is satisfied.

\begin{theorem}\label{thm: convergence to stationary point}
	Suppose Assumptions \ref{assumption: feasible}-\ref{assumption: descentOverFirstBlock} hold. Let $({x}^*, \bar{x}^*, z^*)$ be a limit point of the outer-level iterates $\{({x}^{k}, \bar{x}^{k}, z^{k})\}_{k\in\Z_{++}}$ of Algorithm \ref{alg: outer}.
	 If $\{y^{k}\}_{k\in\Z_{++}}$ has a limit point $y^*$ along a subsequence converging to $({x}^*, \bar{x}^*, z^*)$, then $({x}^*, \bar{x}^*, y^*)$ is a stationary point of problem \eqref{formulation: generic} satisfying stationary condition \eqref{condition: stationary}.
\end{theorem}
\begin{proof}
	See Appendix \ref{sec: proof of thm2}. \qed
\end{proof}


In Theorem \ref{thm: convergence to stationary point}, we assume the dual variable $\{y^{k}\}$ has a limit point $y^*$. Since by \eqref{zoptCondition} we have $\lambda^k+\beta^kz^{k}+y^{k}=0$, the ``true" multiplier  $\tilde{\lambda}^{k+1}:=\lambda^k+\beta^kz^{k}$ also has a limit point. {We note that the existence of a limit point can be ensured by the existence of a bounded dual subsequence, which is known as the sequentially bounded constraint qualification (SBCQ) \cite{luo1996exact}. More specifically} in the context of smooth nonlinear problems, the constant positive linear dependence (CPLD) condition proposed by Qi and Wei \cite{qi2000constant} {also} guarantees that the sequence of dual variables has a bounded subsequence. Therefore, we think our assumption of $y^*$ is analogous to some constraint qualification in the KKT condition for smooth problems, and does not restrict the field where our algorithm is applicable.

We also give some comments regarding the predetermined bound $[\underline{\lambda},\overline{\lambda}]$ on outer-level dual variable $\lambda$. In principle, the bound should be chosen large enough at the beginning of the algorithm. Otherwise $\lambda^k$ will probably stay at $\underline{\lambda}$ or $\overline{\lambda}$ all the time; in this case, the outer-level ALM automatically converts to the penalty method, which usually requires $\beta^k$ to go to infinity, because, in general, exact penalization does not hold for a quadratic penalty function. In contrast, a proper choice of the dual variable can compensate asymptotic exactness even when the penalty function is not sharp at the origin. 
In terms of convergence analysis, one may notice that the choice of $\lambda$ is actually not that important: if we set $\lambda^k=0$ for all $k$, the analysis can still go through. This is because in the framework of ALM, the dual variable $\lambda$ is closely related to local optimal solutions. While we study global convergence, it is not clear which local solution the algorithm will converge to, so the role of $\lambda$ is not significant. {It seems difficult to establish the uniform boundedness of dual variables without the projection step, especially when there are nonconvex constraints}

In Section \ref{section: Local}, we will show our algorithm inherits some nice local convergence properties of ALM, {where $\lambda$ does play an important role}, and in Section \ref{section: Examples}, we will demonstrate that keeping $\lambda$ indeed enables the algorithm to converge faster than the penalty method.

\subsection{Iteration Complexity}\label{section: rate}
In this subsection, we provide an iteration complexity analysis of the proposed algorithm. \rev{In view of \eqref{condition: stationary}}, our goal is to give a complexity bound on the number of ADMM iterations for finding an $\epsilon$-stationary solution \rev{ $(x^K,\bar{x}^K, y^K)$} in the sense that there exist $d_1, d_2, d_3$ such that
\begin{subequations}\label{eq: opt_mod}
		\begin{align}
			&d_1\in \nabla f(x^K) + A^\top y^K+ N_\mathcal{X} (x^K), \label{eq: dual_orig_1}\\
			& d_2 \in B^\top y^K + N_{\bar{\mathcal{X}}} (\bar{x}^K), \label{eq: dual_orig_2}\\
			& d_3 = Ax^K+B\bar{x}^K,  \label{eq: primal_orig}\\
			& \max\{\|d_1\|,\|d_2\|,\|d_3\| \} \leq \epsilon.
		\end{align}
	\end{subequations}
In order to illustrate the main result in a concise and clear way, we slightly modify the outer-level Algorithm \ref{alg: outer} as follows.

\begin{algorithm}[!htb]
	\caption{: Modified Outer-level ALM}\label{alg: outer_mod}
	\begin{algorithmic}[1]
		\STATE \textbf{Initialize} \rev{$\lambda^1\in [\underline{\lambda}, \overline{\lambda}]$ where $\underline{\lambda}, \overline{\lambda}\in \R^{m}$ and $\overline{\lambda}-\underline{\lambda}\in \R^{m}_{++}$,  $\beta^1 = \beta^0\gamma$ for some $\beta^0\geq \frac{1}{4}$ and $\gamma >1$, $\epsilon > 0$;}
		\FOR{$k=1,2,3,\cdots$}
		\STATE \rev{obtain $({x}^{k},\bar{x}^{k},z^{k},y^{k})$ from Algorithm \ref{alg: inner} with input $(\lambda^k, \beta^k, \epsilon, \epsilon, \epsilon/2)$;}
		\STATE $\lambda^{k+1} \gets \mathrm{Proj}_{[\underline{\lambda}, \overline{\lambda}]}\big(\lambda^{k} + \beta^kz^{k})$, $\beta^{k+1}\gets\gamma\beta^{k}$;
		\ENDFOR
	\end{algorithmic}
\end{algorithm}

In Algorithm \ref{alg: outer_mod}, we choose some tolerance $\epsilon >0$ and apply the stopping criteria \eqref{stopping criteria} with \rev{$\epsilon^k_1=\epsilon^k_2 = 2\epsilon^k_3 = \epsilon$} for the $k$-th inner-level ADMM. For the ease of the analysis, we multiply the outer-level penalty $\beta^k$ by some $\gamma>1$ in each outer-iteration, instead of checking the improvement in primal feasibility.  Moreover, we add the following technical assumption.
\begin{assumption}\label{assumption: boundedLagrangian}
	There exists some $\overline{L}\in \R$ such that $L_{\rho^k} (x^0, \bar{x}^0,z^0, y^0) \leq \overline{L}$
	for all $k\in \Z_{++}$.
\end{assumption}
\begin{remark}
	This assumption can be satisfied if ADMM can make significant progress in reducing $\|z^k\|$ or equivalently $\|Ax^k + B\bar{x}^k\|$. Another naive implementation can be seen as follows: suppose a feasible point $(x, \bar{x})$ is known a priori, i.e., $(x, \bar{x})\in \mathcal{X}\times \bar{\mathcal{X}}$, and $Ax+B\bar{x}=0$,  then the initialization of the $k$-ADMM with $(x^0, \bar{x}^0, z^0, y^0) = (x, \bar{x},0, -\lambda^{k})$ guarantees that $L_{\rho^k} (x^0, \bar{x}^0, z^0,y^0)\leq \overline{L}$, where $\overline{L} = \max_{x\in \mathcal{X}} f(x)$.
\end{remark}

\begin{theorem}\label{thm: rate}
	Under Assumptions \ref{assumption: feasible}-\ref{assumption: boundedLagrangian}, Algorithm \ref{alg: outer_mod} finds an $\epsilon$-stationary solution \rev{$(x^K,\bar{x}^K, y^K)$} of \eqref{formulation: generic} in the sense of \eqref{eq: opt_mod} in no more than $\mathcal{O}\left(1/\epsilon^4\right)$ inner ADMM iterations. Furthermore, if $\hat{\lambda}^{k}:=\lambda^k + \beta^k z^k$ is bounded, then the \rev{iteration complexity} can be improved to $\mathcal{O}\left(1/\epsilon^3\right).$
\end{theorem}
\begin{proof}
	See Appendix \ref{sec: proof of thm3}.	\qed
\end{proof}

\rev{We acknowledge that $\{\hat{\lambda}^k\}_k$ may not be bounded for some applications. The second part of Theorem \ref{thm: rate} (as well as Theorem \ref{thm: multblock_rate} to be presented next) aims to reasonably justify the performance of the proposed algorithm under the boundedness condition. }

\subsection{Extension to Multi-block Problems}
In this section, we will discuss the extension of the two-level framework to the more general class of multi-block problems \eqref{eq:genericADMM}. In particular, we are interested in the case where Conditions 1 and 2 are not satisfied. As we mentioned earlier, Jiang et al. \cite{jiang2019structured} proposed to solve the following perturbed problem of \eqref{eq:genericADMM}:
	\begin{align}\label{eq: relaxed_mblock}
		\min_{x_1,\cdots, x_p, z}~&\sum_{i=1}^p f_p(x_i) + g(x_1,\cdots,x_p)+\lambda^\top z+\frac{\beta}{2}\|z\|^2\\
		~\mathrm{s.t.}~&\sum_{i=1}^pA_ix_i +z = b,~ x_i\in \mathcal{X}_i~\forall i\in [p]. \notag
	\end{align}
	for $\lambda =0$, where $f_i$'s are lower semi-continuous, and $f_p$ and $g$ are Lipschitz differentiable. Notice that we change $h$ and $B$ in \eqref{eq:genericADMM} to $f_p$ and $A_p$ for ease of presentation. The iteration complexity for this one-level workaround is $\mathcal{O}\left(1/\epsilon^4\right)$ when the dual variable is bounded, and  $\mathcal{O}\left(1/\epsilon^6\right)$ otherwise. In contrast, we can apply our two-level framework to the multi-block problem \eqref{eq:genericADMM} as well: with some initial guess $\lambda$ and moderate $\beta$, we solve \eqref{eq: relaxed_mblock} approximately using ADMM, and then we update $\lambda$ and $\beta$. \rev{We define dual residual similarly as in \eqref{stopping criteria1}-\eqref{stopping criteria2} for each block variable, and $\epsilon$-stationary solution as a pair of primal-dual points where the primal residual ($\|\sum_{i=1}^p A_ix_i-b\|$) and dual residuals (with respect to each primal block) are less than some $\epsilon>0$. An extension of the two-level framework is presented in Algorithm \ref{alg: mblock} below. }
	\begin{algorithm}[!htb]
	\caption{: Extension to Multi-block Problems}\label{alg: mblock}
	\begin{algorithmic}[1]
		\STATE \textbf{Initialize} \rev{$\lambda^1\in [\underline{\lambda}, \overline{\lambda}]$ where $\underline{\lambda}, \overline{\lambda}\in \R^{m}$ and $\overline{\lambda}-\underline{\lambda}\in \R^{m}_{++}$, $\beta^1 = \beta^0\gamma $ for some $\beta^0>0$ and $\gamma >1$, $\epsilon>0$; }
		\FOR{$k = 1,2,3,\cdots $}
		\STATE \rev{obtain an $(\epsilon/2)$-stationary solution $(x_1^k,\cdots, x_p^k,z^k, y^k)$ of \eqref{eq: relaxed_mblock} with $(\lambda, \beta) = (\lambda^k, \beta^k)$ by proximal ADMM-m or ADMM-g \cite{jiang2019structured};}
		\STATE $\lambda^{k+1} \gets \mathrm{Proj}_{[\underline{\lambda}, \overline{\lambda}]}\big(\lambda^{k} + \beta^kz^{k})$, $\beta^{k+1}\gets \gamma \beta^k$;
		\ENDFOR
	\end{algorithmic}
\end{algorithm}

\begin{theorem}\label{thm: multblock_rate}
	Under Assumption \ref{assumption: boundedLagrangian}, Algorithm \ref{alg: mblock} finds an $\epsilon$-stationary solution of \eqref{eq:genericADMM} in no more than $\mathcal{O}(1/\epsilon^6)$ ADMM iterations. Furthermore, if $\hat{\lambda}^{k}:=\lambda^k + \beta^k z^k$ is bounded, then the \rev{iteration complexity} can be improved to $\mathcal{O}\left(1/\epsilon^4\right).$
\end{theorem}
\begin{proof}
	See Appendix \ref{sec: proof of thm4}. \qed	
\end{proof}

\rev{Although the proposed algorithm invokes a series of ADMM with varying outer-level dual variables and penalties, Theorem \ref{thm: multblock_rate} suggests that its iteration complexity for finding a stationary solution is no worse than that of the single-looped ADMM variant proposed in \cite{jiang2019structured}. In Section \ref{section: Local}, local convergence results are presented as an alternative perspective to help us understand the behavior of the proposed algorithm.
}

\section{Local Convergence} \label{section: Local}
We show in this section that the proposed algorithm inherits some nice local convergence properties of the augmented Lagrangian method. 
{The analysis builds on the classic local convergence of ALM \cite{bertsekas2014constrained}, and our purpose is to provide some quantitative justification for the fast convergence of the two-level algorithm, which will be presented in Section \ref{section: Examples}.}

To begin with, we note that the inner-level problem \eqref{formulation: inner} solved by ADMM is closely related to the problem
\begin{align}\label{eq: inner_no_slack}
	\min_{x\in \mathcal{X}, \bar{x}\in \bar{\mathcal{X}}} f(x)- \langle \lambda^k, Ax+B\bar{x}\rangle +\frac{\beta^k}{2}\|Ax+B\bar{x}\|^2.
\end{align}
It is straightforward to verify that $(x^k, \bar{x}^k)$ is a stationary point of \eqref{eq: inner_no_slack} in the sense that 
\begin{subequations}\label{eq: inner_no_slack_stationary}
\begin{align}
	0 \in & \nabla f(x^k) + A^\top (-\lambda^k + \beta^k(Ax^k+B\bar{x}^k)) +N_{\mathcal{X}}(x^k),\\
	0 \in & B^\top (-\lambda^k + \beta^k(Ax^k+B\bar{x}^k)) +N_{\bar{\mathcal{X}}}(\bar{x}^k),
\end{align}
\end{subequations}
if and only if $(x^k,\bar{x}^k,z^k,y^k)$ is a stationary point of \eqref{formulation: inner} satisfying \eqref{condition: innerStationary} with $z^k = -Ax^k-B\bar{x}^k$ and $y^k = -\lambda^k + \beta^k(Ax^k+B\bar{x}^k)$. In addition, an approximate stationary solution of \eqref{formulation: inner} can be mapped to an approximate solution of \eqref{eq: inner_no_slack}.
\begin{lemma}
	Let $(x^k, \bar{x}^k, z^k, y^k)$ be a $(d_1^k,d_2^k,d_3^k)$-stationary point of \eqref{formulation: inner} in the sense of \eqref{eq: appStationary}. Then $(x^k, \bar{x}^k)$ is a $(\tilde{d}_i^k, \tilde{d}_2^k)$-stationary point of \eqref{eq: inner_no_slack}, i.e.,
\begin{subequations}\label{eq: inner_no_slack_stationary_approx}
\begin{align}
	\tilde{d}_1^k \in & \nabla f(x^k) + A^\top (-\lambda^k + \beta^k(Ax^k+B\bar{x}^k)) +N_{\mathcal{X}}(x^k),\\
	\tilde{d}_2^k \in & B^\top (-\lambda^k + \beta^k(Ax^k+B\bar{x}^k)) +N_{\bar{\mathcal{X}}}(\bar{x}^k),
\end{align}
\end{subequations}
where $ \tilde{d}_1^k = d_1^k + \beta^k A^\top d_3^k$, and $\tilde{d}_2^k = d_2^k + \beta^k B^\top d_3^k$.
\end{lemma}
\begin{proof}
	By \eqref{eq: appStationary3} and \eqref{eq: appStationary4}, we have $y^k = -\lambda^k +\beta^k (Ax^k+Bz^k-d_3^k)$; plugging this equality into 	\eqref{eq: appStationary1}-\eqref{eq: appStationary2} yields the result. \qed
\end{proof}

Thus we will mainly focus on problem \eqref{eq: inner_no_slack} and its approximate stationarity system \eqref{eq: inner_no_slack_stationary_approx} in this section. We add following assumptions on problem \eqref{formulation: generic}.
\begin{assumption}\label{assumption: second_order}
    {
	The set $\mathcal{X}=\{x\in \R^{n_1} : h(x)=0\}$ is compact with $h:\R^{n_1}\rightarrow \R^{p}$ being second-order continuously differentiable, the objective $f$ is second-order continuously differentiable over some open set containing $\mathcal{X}$, and $\bar{\mathcal{X}}$ is a convex set with nonempty interior in $R^{n_2}$. The matrix $B$ has full column rank. 
	}
\end{assumption}
\begin{remark}
	Any inequality constraint in $\mathcal{X}$ can be converted to the form $h(x)=0$ by adding the squares of additional slack variables. The second-order continuous differentiability of $f$ and $h$ are standard to establish local convergence of the augmented Lagrangian method. In addition, we explicitly require $B$ to have full column rank, which can be justified by the reformulation \eqref{eq:dupSchemeParallel}.
\end{remark}

\begin{definition}
	Let $x^* \in \mathcal{X}=\{x|h(x)=0\}$ and $\nabla h(x^*) = [\nabla h_1(x^*),\cdots, \nabla h_p(x^*)]\in \R^{n_1 \times p}$.
	\begin{enumerate}
		\item The tangent cone of $\mathcal{X}$ at $x^*$: 
		$$T_{\mathcal{X}}(x^*)=\left\{d\in \R^{n_1}~|~ \exists x^k\in \mathcal{X},  x^k\rightarrow x^*, \frac{x^k-x^*}{\|x^k-x^*\|}\rightarrow \frac{d}{\|d\|}\right\}.$$
		\item The cone of the first-order feasible variation of $\mathcal{X}$ at $x^*$: $$V_{\mathcal{X}}(x^*)=\{d \in \R^{n_1}: \nabla h(x^*)^\top d=0\}.$$
		\item We say that $x^*$ is quasiregular if $T_{\mathcal{X}}(x^*)=V_{\mathcal{X}}(x^*)$.
	\end{enumerate}

\end{definition}
\begin{assumption}\label{assumption: second_order_sufficient}
	Problem \eqref{formulation: generic} has a feasible solution $(x^*, \bar{x}^*)$, where $\bar{x}^*\in \mathrm{Int}~\bar{\mathcal{X}}$ and all equality constraints have linearly independent gradient vectors. In addition, $(x^*, \bar{x}^*)$, together with some dual multipliers $\lambda^*\in (\underline{\lambda}, \overline{\lambda})$ and $\mu^*\in \R^p$, satisfy
	\begin{subequations}
		\begin{align}
			& \nabla f(x^*) - A^\top \lambda^* + \nabla h(x^*)\mu^* = 0, ~~B^\top \lambda^* = 0,\\
			& u^\top \left(\nabla^2 f(x^*)+\sum_{i=1}^p \mu^*_i \nabla^2 h_i(x^*)\right)u >0, \notag \\
			& \quad\quad ~\forall (u,v)\neq 0 ~\mathrm{s.t.}~ Au+Bv =0, ~\nabla h(x^*)^\top u = 0.	\label{assumption: sosc}
		\end{align}
		\end{subequations}
	Moreover, there exists $R>0$ such that $x$ is quasiregular for all $x\in B_{R}(x^*) \cap \mathcal{X}$.
\end{assumption}
\begin{remark}
	Assumption \ref{assumption: second_order_sufficient} can be regarded as a second-order sufficient condition at a local minimizer $(x^*,\bar{x}^*)$ of problem \eqref{formulation: generic}, and $B$ having full column rank is necessary for \eqref{assumption: sosc} to hold. 
	The quasiregularity assumption can be satisfied by a wide range of constraint qualifications.
	\end{remark}

The quasiregularity condition bridges the normal cone stationarity condition to the well-known KKT condition.
\begin{proposition}
	If $x^k\in \mathcal{X}$ is quasiregular, $\bar{x}^k \in \mathrm{Int}~\bar{\mathcal{X}}$, and $(x^k, \bar{x}^k)$ satisfies condition \eqref{eq: inner_no_slack_stationary_approx}	with some $\tilde{d}_1^k$ and $\tilde{d}_2^k$, then there exists some $\mu^k \in \R^p$ such that $(x^k, \bar{x}^k)$ satisfies the approximate KKT condition of problem \eqref{eq: inner_no_slack}, i.e., $h(x^k)=0$,
	\begin{subequations}\label{eq: approx stationary}
	\begin{align}
		\tilde{d}_1^k &= \nabla f(x^k) + A^\top(-\lambda^k +\beta^k(Ax^k+B\bar{x}^k)) + \nabla h(x^k)\mu^k,\\
		\tilde{d}_2^k &= B^\top (-\lambda^k + \beta^k(Ax^k+B\bar{x}^k)).
	\end{align}
	\end{subequations}
\end{proposition}
\begin{proof}
     The claim uses the fact that the normal cone $N_{\mathcal{X}}(x)$ is the polar cone of the tangent cone $T_{\mathcal{X}}(x)$, and $N_{\bar{\mathcal{X}}}(\bar{x}) = \{0\}$ for $\bar{x}\in \mathrm{Int}~\bar{X}$. The existence of $\mu^k$ follows from the Farkas' Lemma \cite[Prop 4.3.12]{bertsekas1999nonlinear}. \qed
\end{proof}
\begin{proposition}\label{prop: existence of local sol}
	Suppose Assumption \ref{assumption: second_order} holds, and let $(x^*, \bar{x}^*, \mu^*, \lambda^*)$ be defined as in Assumption \ref{assumption: second_order_sufficient}. There exist positive $\underline{\beta}$ and $\delta$ such that for all $s=(\lambda, \beta, \tilde{d}_1,\tilde{d}_2)$ belonging to the set 
		$$S := \left \{s=(\lambda, \beta, \tilde{d}_1,\tilde{d}_2)~|~\left(\frac{\|\lambda-\lambda^*\|^2}{\beta^2} + \|\tilde{d}_1\|^2 + \|\tilde{d}_2\|^2\right)^{1/2} \leq \delta, \beta\geq \underline{\beta} \right \},$$
	there exist unique continuously differentiable mappings $x(s)$, $\bar{x}(s)$, $\mu(s)$, and $\tilde{\lambda}(s) = \lambda-\beta [Ax(s)+B\bar{x}(s)]$
		defined in the interior of $S$ satisfying 
		\begin{align}
			& \nabla f[x(s)] - A^\top \tilde{\lambda}(s) +\nabla h[x(s)] \mu(s) = \tilde{d}_1, ~B^\top \tilde{\lambda}(s) = \tilde{d}_2,~ h[x(s)]= 0;\label{eq: implicit_1}\\
			& \left(x(\lambda^*, \beta,0,0), \bar{x}(\lambda^*, \beta,0,0), \mu(\lambda^*, \beta,0,0), \tilde{\lambda}(\lambda^*, \beta,0,0)\right) = (x^*, \bar{x}^*, \mu^*, \lambda^*);\label{eq: implicit_2}\\
			& \bar{x}(s)\in\mathrm{Int}~\bar{\mathcal{X}}, \|x(s)-x^*\|\leq R.\label{eq: implicit_3}
		\end{align}
		Moreover, there exists $M>0$ such that for any $s\in S$, we have
		{
		\begin{align}
			& \max \{\|x(s)-x^*\|, \|\bar{x}(s)-\bar{x}^*\|, \|\tilde{\lambda}(s) - \lambda^*\| \} \notag \\
			 \leq & M(\|\lambda - \lambda^*\|^2/\beta^2 + \|\tilde{d}_1\|^2+\|\tilde{d}_2\|^2)^{1/2}.\label{eq: implicit_4} 
		\end{align}}
\end{proposition}
\begin{proof}
	See Appendix \ref{sec: proof of prop3}. \qed	
\end{proof}

\begin{proposition}\label{proposition: contraction}
Suppose Assumptions \ref{assumption: second_order} and \ref{assumption: second_order_sufficient} hold. Let $M$ and $S$ be defined as in Proposition \ref{prop: existence of local sol}. Suppose for some $(\beta^k, \lambda^k)$ with $\beta^k \geq M$, ADMM finds a $(d_1^k, d_2^k, d_3^k)$-stationary solution $(x^k, \bar{x}^k, z^k, y^k)$ satisfying \eqref{eq: appStationary} such that 
	\begin{enumerate}
		\item $s^k = (\lambda^k, \beta^k, \tilde{d}_1^k, \tilde{d}_2^k)\in S$, where $\tilde{d}_1^k = d_1^k + \beta^k A^\top d_3^k$, and $\tilde{d}_2^k = d_2^k + \beta^k B^\top d_3^k$;
		\item $(x^k, \bar{x}^k )= (x(s^k), \bar{x}(s^k))$;
		\item there exists a positive constant $\eta < \beta^k/M$ such that
		\begin{equation}\label{eq: ub on residual}
			   \left( \|A\|+\|B\|+\frac{1}{M} \right) (\|d_1^k\|+\|d_2^k\|+\|d_3^k\|)\leq \frac{\eta}{\beta^k} \|Ax^k+B\bar{x}^k\|.
		\end{equation}
	\end{enumerate}	
	Denote $\hat{\lambda}^k := \lambda^k + \beta^k z^k$. Then we have
	\begin{equation}\label{eq: contraction}
		\|\hat{\lambda}^k - \lambda^*\| \leq \left(\frac{M}{\beta^k} +\frac{M\eta (M+\beta^k)}{\beta^k(\beta^k-M\eta)} \right) \|\lambda^k - \lambda^*\|.
	\end{equation}
\end{proposition}
\begin{proof}
	See Appendix \ref{sec: proof of prop4}. \qed	
\end{proof}
	
\begin{theorem} \label{thm: local converge}
	Suppose Assumptions \ref{assumption: second_order} and \ref{assumption: second_order_sufficient} hold. Let 
	$\underline{\beta}$, $\delta$, $M$, and $S$ be defined as in Proposition \ref{prop: existence of local sol}. Suppose the three conditions in Proposition \ref{proposition: contraction} are satisfied for all iterates $k\in \Z_{+}$, and the initial penalty $\beta^0 > \frac{M}{\varrho}(1 +\eta + \varrho \eta)$ for some $\varrho \in (0,1)$. Then the following results hold:
	\begin{enumerate}
		\item the sequence $\{\lambda^k\}_{k\in\Z_{++}}$ stays inside the interior of $[\underline{\lambda},\overline{\lambda}]$, i.e., $\lambda^{k+1} = \hat{\lambda}^k = \lambda^k +\beta^k z^k$;
		\item the dual variable $\lambda^k$ converges to $\lambda^*$ with at least a linear rate i.e., 
			$$
			\lim_{k\rightarrow+\infty}\frac{\|\lambda^{k+1}-\lambda^*\|}{\|\lambda^{k}-\lambda^*\|}  \leq \varrho < 1, ~\text{and}~\lim_{k\rightarrow+\infty}\frac{\|\lambda^{k+1}-\lambda^*\|}{\|\lambda^{k}-\lambda^*\|}  =0 \text{~if~} \beta^k\rightarrow +\infty;$$
		\item 
			$\max \{\|x^k-x^*\|, \|\bar{x}^k-\bar{x}^*\|\} \leq \varrho \|\lambda^k-\lambda^*\| \leq \varrho^{k+1} \|\lambda^0-\lambda^*\|.$
	\end{enumerate}
\end{theorem}
\begin{proof}
	The coefficient in the right-hand side of \eqref{eq: contraction} is less than $\varrho$ if $\beta^k > \frac{M}{\varrho}(1 +\eta + \varrho \eta)$, and converges to 0 if $\beta^k\rightarrow +\infty$; thus the first two parts of the theorem are proved. Part 3 is due to \eqref{eq: implicit_4} and the same derivation as in Proposition \ref{proposition: contraction}. \qed 
\end{proof}

\rev{
Theorem \ref{thm: local converge} suggests that if we have a good initial point (inside the set $S$ defined in Proposition \ref{prop: existence of local sol}) and each inner ADMM locates the approximate stationary solution specified by the implicit function theorem (as in Proposition \ref{proposition: contraction}), then the two-level algorithm exhibits local linear or super-linear convergence in its outer level. The results are consistent with our empirical observations to be presented in Section \ref{section: Examples}, where usually only a few outer-level updates are required upon convergence.
}

\section{Examples}\label{section: Examples}
We present some applications of the two-level algorithm. All programs are coded using the Julia programming language 1.1.0 with JuMP package 0.18 \cite{DunningHuchetteLubin2017} and implemented on a 64-bit laptop with one 2.6 GHz Intel Core i7 processor, 6 cores, and 16GB RAM. 
All nonlinear constrained problems are solved by the interior point solver IPOPT (version 3.12.8) \cite{wachter2006implementation} with linear solver MA27.

\subsection{Nonlinear Network Flow Problem}\label{section: network}
We consider a specific class of network flow problems, which is covered by the motivating formulation \eqref{eq:networkExample}. Suppose a connected graph $G(\mathcal{V}, \mathcal{E})$ is given, where some nodes have demands of certain commodity and such demands need to be satisfied by some supply nodes. Each node $i$ keeps local variables $[p_i;x_i; \{x_{ij}\}_{j\in \delta(i)}; \{y_{ij}\}_{j\in \delta(i)}]$ $\in \R^{2|\delta(i)|+2}$. Variable $p_i$ is the production variable at node $i$, and $(x_i, x_{ij}, y_{ij})$ determine the flow from node $i$ to node $j$: $p_{ij} = g_{ij}(x_i, x_{ij}, y_{ij})$ where $g_{ij}: \R^3\rightarrow \R$. For example, in an electric power network or a natural gas network, variables $(x_i, x_{ij}, y_{ij})$ are usually related to electric voltages or gas pressures of local utilities. Moreover, for each $(i,j)\in \mathcal{E}$, nodal variables $(x_i,x_j, x_{ij}, y_{ij})$ are coupled together in a nonlinear fashion: $h_{ij}(x_i, x_j, x_{ij},y_{ij}) =0$ where $h_{ij}:\R^4\rightarrow \R$. As an analogy, this coupling represents some physical laws on nodal potentials. We consider the problem 
\begin{subequations} \label{formulation: socp}
\begin{align}
	\min \quad& \sum_{i\in \mathcal{V}}f_i(p_i) \\
	\mathrm{s.t.} \quad
	& p_i - d_i = \sum_{j\in\delta(i)} p_{ij}\quad \forall i \in \mathcal{V}, \label{eq: socp1}\\
	& p_{ij} = g_{ij}(x_i, x_{ij}, y_{ij})\quad \forall (i,j)\in \mathcal{E},\label{socp: balance}\\
	&h_{ij}(x_i, x_j, x_{ij},y_{ij})=0\quad \forall (i,j)\in \mathcal{E},\label{socp: couple}\\
	& x_i \in [\underline{x}_i, \overline{x}_i]\quad \forall i\in \mathcal{V}.
\end{align}
\end{subequations}
In \eqref{formulation: socp}, the generation cost of each node, denoted by $f_i(\cdot)$, is a function of its production level $p_i$. The goal is to minimize total generation cost over the network. Each node is associated with a demand $d_i$ and has to satisfy the injection balance constraint \eqref{eq: socp1}; nodal variable $x_i$ is bounded in $[\underline{x}_i, \overline{x}_i]$. Formulation \eqref{formulation: socp} covers a wide range of problems and can be categorized into the GNF problem studied in \cite{sojoudiconvexification}. 
Suppose the network is partitioned into a few subregions, and $(i,j)$ is an edge crossing two subregions with $i$ (resp. $j$) in region 1 (resp. 2). In order to facilitate parallel implementation, we replace constraint \eqref{socp: couple} by the following constraints with additional variables:
\begin{subequations}\label{eq: newCouple}
\begin{align}
	& h_{ij}(x^1_{i}, x^1_{j}, x_{ij}, y_{ij}) =0,~h_{ji}(x^2_{j}, x^2_{i}, x_{ji}, y_{ji}) =0,\\
	& x^1_i = \bar{x}_i,~x^2_i = \bar{x}_i, ~x^1_j = \bar{x}_j,~x^2_j = \bar{x}_j;
\end{align}
\end{subequations}
similarly, we replace $p_{ij}$ and $p_{ji}$ in \eqref{socp: balance} by
\begin{align}\label{eq: newFlow}
	&p_{ij} = g_{ij}(x_i^1, x_{ij}, y_{ij}),~p_{ji} = g_{ji}(x_j^2, x_{ji}, y_{ji}).
\end{align}
Notice that $(x_i^1, x_j^1, x_{ij}, y_{ij})$ are controlled by region 1 and $(x_i^2, x_j^2, x_{ji}, y_{ji})$ are controlled by region 2. After incorporating constraints \eqref{eq: newCouple}-\eqref{eq: newFlow} for all crossing edges $(i,j)$ into problem \eqref{formulation: socp}, the resulting problem is in the form of \eqref{formulation: generic} and ready for our two-level algorithm. We consider the case where coupling constraints are given by
$p_{ij} = \frac{a_i}{|\delta(i)|}x_i + b_{ij} x_{ij}+ c_{ij} y_{ij}$ and 
$h_{ij}(x_i,x_j, x_{ij}, y_{ij}) =x_{ij}^2+y_{ij}^2 - x_ix_j$. Constraint \eqref{socp: balance} is linear with parameters $(a_i, b_{ij}, c_{ij})$, while the nonconvex constraint \eqref{socp: couple} restricts $(x_i,x_j, x_{ij}, y_{ij})$ on the surface of a rotated second-order cone. 

We use the underlying network topology from \cite{zimmerman2011matpower} to generate our testing networks. Each network is partitioned into two, three, or four subregions. The graph information and centralized objectives from IPOPT are recorded in the first three columns of Table \ref{table:1}. The column ``LB" records the objective value by relaxing the constraint \eqref{socp: couple} to $h_{ij}(x_{i}, x_{j}, x_{ij}, y_{ij})\leq0$. It is clear that this relaxation makes problem \eqref{formulation: socp} convex and provides a lower bound to the global optimal value. Partition information are given in the last two columns.
\begin{table}[tbhp]
{\footnotesize
	\caption{Network information}\label{table:1}
\begin{center}
\begin{tabular}{ ccccccc }
	\hline
	$|\mathcal{V}|$ & 	$|\mathcal{E}|$&Central Obj. & LB& Idx& Partition Size & \# cross edges \\
	\hline
	    & 	  &		     &   	  &  14-2 &  5+9          & 3\\
	  14&   20&     53.67&	53.67 &  14-3 & 	 4+5+5        &	5\\
		& 	  &    	     &        &  14-4 &  2+4+4+4      &	7\\
	\hline
		&  	  &	 		 &        & 118-2 &	 47+71        & 4\\
	 118&  179&	   862.09&  862.03& 118-3 &  35+35+48     &	7\\
		& 	  &          &        & 118-4 &	 20+28+34+36  & 12\\
	\hline
		&  	  &	 		 &        & 300-2 &	 111+189      & 4\\
	 300&  409&	  4751.31& 4751.20& 300-3 &  80+87+133    &	7\\
		& 	  &          &        & 300-4 &	 58+64+88+90  & 11\\
	\hline
		&  	  &	 		 &        & 1354-2 &	 455+899          & 11\\
	1354&  1710&   740.09&  740.02& 1345-3 & 340+455+559      &	18\\
		& 	  &          &        & 1354-4 & 236+303+386+429  & 25\\
	\hline
\end{tabular}
\end{center}}
\end{table}
We compare our algorithm with PDD in \cite{shi2017penalty} as well as the proximal ADMM-g proposed in \cite{jiang2019structured} (which solves problem \eqref{eq: relaxed} instead). We set an absolute tolerance $\epsilon=1.0e-5$, {and initialize $(x_i, x_j, x_{ij},  y_{ij})$ with $(1,1,1,0)$ and $p_i$ with the initial value provided in \cite{zimmerman2011matpower}.}
For our two-level algorithm, we choose $\omega=0.75$, $\gamma=1.5$, and $\beta^1 = 1000$. Each component of $\lambda$ is restricted between $\pm 10^6$. The stopping criteria \eqref{stopping criteria} suggests that 
$\epsilon_1^k$ and $\epsilon^k_2$ should be of the order $\mathcal{O}(\rho^k\epsilon_3^k)$. Motivated by this observation,  we terminate the inner-level ADMM when $\|Ax^t+B\bar{x}^t +z^t\|\leq \max\{\epsilon, \sqrt{m}/(k\cdot \rho^k)\}$, where $m$ is the dimension of the vector, and $\rho^k$ is the inner ADMM penalty at outer iteration $k$. 
For PDD, as suggested in \cite[Section V.B]{shi2017penalty}, we terminate the inner-level of PDD when the relative gap of two consecutive augmented Lagrangian values is less than $\max\{\epsilon,100\epsilon \times (2/3)^k\}$; at the end of each inner-level rBSUM, the primal feasibility is checked and penalty is updated with the same $\omega$ and $\gamma$. Notice that the parameters used in the proposed algorithm and PDD are matched in our experiments. For proximal ADMM-g, we choose $\beta = 1/\epsilon^2$ and  $\rho = 3/\epsilon^2$; additional proximal terms $\frac{1}{2}\|x-x^t\|^2_H$ and $\frac{1}{2}\|\bar{x}-\bar{x}^t\|^2_H$ are added to the subproblem update, where $H = \frac{0.01}{\epsilon}I$. All three algorithms terminate if $\|Ax^k+B\bar{x}^k\| \leq \sqrt{m}\times \epsilon$. Test results are presented in Table \ref{table:2}.

\begin{table}[tbhp]
	{\footnotesize
		\caption{Comparison with PDD \cite{shi2017penalty}, proximal ADMM-g \cite{jiang2019structured}}\label{table:2}
	\begin{center}
	 \begin{tabular}{cccccccc}
		\hline
		Idx& 	Method&	Outer & Inner &	$\|A{x}+B\bar{x}\|$	& Obj	&Gap (\%)& Time (s) \\
		\hline
			&      ADMM-g&	  -&	   42&  3.35e-05&  93.06&   42.33&   8.30\\
		14-2&   	  PDD&   21&   94&	3.65e-05&  53.96& 	 0.53&   2.01\\
			&    Proposed&	 10&	   54&  3.77e-05&  53.98&	 0.58&   1.25\\

		\hline
			&     ADMM-g&	  -&	  247&  5.27e-05&  72.86&  26.34&   6.54\\
		14-3&   	 PDD&    22&  188&	3.88e-05&  53.98&   0.57&   1.82\\
		    & 	Proposed&	 20&  140&  1.11e-05&  53.99&   0.60&   1.40\\
		\hline
			&     ADMM-g&	  -&	  259&  5.90e-05&  81.67&  34.28&  7.58\\
		14-4&   	 PDD&    24&  896&	5.29e-05&  54.72& 	1.91&  9.41\\
		    & 	Proposed&	 19&  250&  7.69e-05&  54.42&   1.37&  2.43\\
	    \hline
	    	&     ADMM-g&	  -&	   40&  4.43e-05& 1283.48&  32.84&  6.20\\
	   118-2&   	 PDD&    24&   85&	3.34e-05&  870.20&   0.94& 	3.75\\
	        & 	Proposed&	 15&  100&  3.16e-05&  864.71&   0.31&  3.94\\
		\hline
			&     ADMM-g&	  -&	   67&  6.26e-05& 1200.01&  28.16&  1.91\\
	   118-3&   	 PDD&    25&  141&	5.80e-05&  867.44&   0.62& 	2.95\\
	        & 	Proposed&	 11&   86&  5.16e-05&  866.17&   0.48&  1.85\\	
		\hline
		    &     ADMM-g&	  -&   59&  8.11e-05& 1201.82&  28.27&   4.48\\
	   118-4&   	 PDD&    25&  178&	6.64e-05&  868.68&   0.77&   3.59\\
	        & 	Proposed&	 14&  137&  6.50e-05&  867.16&   0.59&   2.86\\
	    \hline
	    	&     ADMM-g&	  -&	  227&  4.80e-05&  5054.52&  6.00&   15.34 \\
	   300-2&   	 PDD&    28&   93&	3.87e-05&  4757.06&  0.12&   5.54\\
	        & 	Proposed&	 20&  304&  1.74e-05&  4751.71&  0.01&   18.04\\
		\hline
		    &     ADMM-g&	  -&	  400&  6.43e-05&  5049.16&  5.90&  20.46\\
	   300-3&   	 PDD&    28&  127&	6.27e-05&  4757.63&  0.13&  6.27\\
	        & 	Proposed&	 25&  517&  4.80e-05&  4752.52&  0.03&  23.83\\	
		\hline
		    &     ADMM-g&	  -&	 1000&  1.67e-04&  5041.50&  5.76&  46.41 \\
	   300-4&   	 PDD&    28&  243&	7.37e-05&  4765.06&  0.29&  10.28\\
	        & 	Proposed&	 20&  512&  7.26e-05&  4752.56&  0.03&  19.53\\
	   \hline
	   		&     ADMM-g&	  -&	  901&  7.86e-05& 767.44&   3.57&  672.51\\
	  1354-2&   	 PDD&    25&  299&	6.56e-05& 745.50& 	0.73&  212.91\\
	        & 	Proposed&	 19&  126&  6.39e-05& 743.32&   0.44&  84.61\\
	   \hline
	   		&     ADMM-g&	  -&	 1000&  1.66e-04&  771.52&  4.08&  342.77\\
	  1354-3&   	 PDD&    26&  422&  8.86e-05&  747.90& 	1.05&  174.78\\
	        & 	Proposed&	 18&  137&  7.04e-05&  744.91&  0.66&  50.77\\
	   \hline
	   		&     ADMM-g&	  -&	 1000&  4.90e-04&  769.55&  3.84&  265.59\\
	  1354-4&   	 PDD&    27&  838&	1.10e-04&  749.61& 	1.28&  523.78\\
	        & 	Proposed&	 18&  170&  8.15e-05&  744.98&  0.67&  115.71\\
	   \hline
	\end{tabular}
	\end{center}}
\end{table}
\rev{The number of outer-level updates (ALM multiplier updates for PDD and the two-level algorithm) and the total number of inner-level updates (rBSUM iterations for PDD and ADMM iterations for the two-level algorithm) are reported in columns ``Outer" and ``Inner", respectively. } We see that both the proposed algorithm and PDD converge in all test cases, and both of them take around 10-30 outer-level iterations to drive the constraint violation \rev{``$\|Ax+B\bar{x}\|$"} close to zero. 
PDD converges fast for three cases of network 300; however, for most cases it requires more total inner and outer iterations for convergence than the proposed algorithm. Such performance is consistent with the analysis in \cite{shi2017penalty}, where the inner-level rBSUM algorithm needs to run long enough to guarantee each block variable achieves stationarity. The objective values and duality gaps of solutions generated by the three algorithms are recorded in ``Obj" and ``Gap (\%)". We can see both the proposed algorithm and PDD are able to achieve near global optimality, while the proposed algorithm finds solutions with even higher quality than PDD at termination. The algorithm running time (model building time excluded) is recorded in the last column ``Time (s)". We would like to emphasize that, under similar algorithmic settings, the proposed two-level algorithm in general converges faster and shows better scalability than the other two algorithms. 

Even with sufficiently large penalty on the slack variable $z$, the proximal ADMM-g does not achieve the desired primal feasibility for cases 300-4, 1354-3, and 1354-4 in 1000 iterations; for other cases, it usually \rev{takes} more time than the proposed algorithm. We point out that ADMM-g usually finds sub-optimal solutions, and the duality gap can be as large as 42\%. We believe this happens because problem \eqref{eq: relaxed} requires the introduction of large $\beta(\epsilon)$ and $\rho(\epsilon)$, which affect the structure of the original problem \eqref{formulation: generic} and result in solutions with poor quality. Moreover, such large parameters also cause numerical issues for the IPOPT solver and slow down the overall convergence, and this is the reason why ADMM-g takes a long time even when the number of iterations is relatively small for the first four test cases. We also tried a smaller penalty $\mathcal{O}(1/\epsilon)$, in which case the ADMM-g cannot achieve the desired feasibility level.

\subsection{Minimization over Compact Manifold}\label{subsection: manifold}
We consider the following problem
\begin{subequations}\label{formulation: sphere}
	\begin{align}
		\min \quad & \sum_{i=1}^{n_p-1}\sum_{j=i+1}^{n_p} \big((x_i-x_j)^2 +(y_i-y_j)^2+(z_i-z_j)^2\big)^{-\frac{1}{2}}\label{sphere: obj}\\
		\mathrm{s.t.}\quad 
		& x_i^2+y_i^2+z_i^2=1,\quad \forall i \in [n_p].\label{sphere: constr}
	\end{align}
\end{subequations}
Problem \eqref{formulation: sphere} is obtained from the benchmark set COPS 3.0 \cite{dolan2004benchmarking} of nonlinear optimization problems. The same problem is used in \cite{wen2013feasible} to test algorithms that preserve spherical constraints through curvilinear search. We compare solutions and computation time of our distributed algorithm with those obtained from the centralized IPOPT solver. Each test problem is firstly solved in a centralized way; objective value and total running time are recorded in the second and third column of Table \ref{table:3}. Using additional variables to break couplings in the objective \eqref{sphere: obj}, we divide each test problem into three subproblems. Subproblems have the same number of variables, constraints, and objective terms (as in \eqref{sphere: obj}). For our two-level algorithm, we choose $\gamma = 2$, $\omega = 0.5$; initial value of penalty $\beta^1$ is set to 100 for $n_p \in \{60,90\}$, 200 for $n_p \in\{120,180\}$, and 500 for $n_p\in\{240, 300\}$. {The initial point is set to $(x_i,y_i,z_i)=(0.2, 0.3, 0.1)$ for all $i\in [n_p]$ for IPOPT.}
We set bounds on each component of $\lambda$ to be $\pm 10^6$. The inner-level ADMM terminates when $\|Ax^t+B\bar{x}^t+z^t\|\leq \sqrt{3n_p}/(2500k)$, where $k$ is the current outer-level index; the outer level terminates when $\|Ax^k+B\bar{x}^k\|\leq \sqrt{3n_p}\times1.0e-6$.
\begin{table}[h!]
	{\footnotesize
		\caption{Comparison of centralized and distributed solutions}\label{table:3}
	\begin{center}
\begin{tabular}{ccc|c>{\rowmac}c>{\rowmac}c>{\rowmac}c<{\clearrow}cc}
		\hline
		 &\multicolumn{2}{c|}{Centralized Ipopt} & \multicolumn{6}{c}{Proposed two-level algorithm and penalty method}\\
		\hline
		$n_p$ &   Obj. & Time (s)&   Method&            Outer&  Inner& $\|A{x}+B\bar{x}\|$  & Gap (\%) &Time (s)\\
		\hline
			60& 1543.83&    9.55&   Proposed& 11&    62& 1.17e-05&   0.79& {4.17} \\
			  &        &        &   Penalty&  18&   102& 1.32e-05&   0.54&   7.82 \\
		\hline
			90&	3579.18&   17.34&  Proposed& 12&    98& 1.01e-05&  0.14& {20.42}\\
			  &       &         &  Penalty&  18&   136& 9.62e-06&  0.13& 26.97\\
		\hline
		   120&	6474.77&  56.64&  Proposed&  12&    79& 8.77e-06&    0.30& {45.28}\\
		      &       &         &  Penalty&   17&   113& 1.75e-05&   0.21& 60.42\\
		\hline
		   180& 14867.41& 212.95& Proposed& 12&   82& 1.71e-05&    0.10& {173.81}\\
		      &        &         & Penalty& 18&  121& 1.69e-05&  0.09& 233.75\\
		\hline
		   240&26747.84&  710.68&  Proposed& 12&   79& 1.25e-05&  0.44& {417.62}\\
		   	  &        &        &   Penalty& 17&  111& 2.02e-05&  0.28& 534.59\\	
		\hline
		   300&42131.88& 1568.64&  Proposed& 12&   80& 1.51e-05&  0.17& {852.19}\\
		      &        &        &  Penalty& 18&  115& 2.94e-05&   0.12& 1094.91\\
		\hline
	\end{tabular}
	\end{center}}
\end{table}

The quality of the centralized solution is slightly better than distributed solutions, while our proposed algorithm is able to reduce the running time significantly except for one case ($n_p=90$) while ensuring feasibility. In addition, as indicated in Table \ref{table:3}, numbers of iterations for both inner and outer levels stay stable across all test cases, which suggests that the proposed algorithm scales well with the size of the problem. \rev{In view of the discussion in Section \ref{section: outer}, we compare with the penalty method, where $\lambda^k=0$ for all $k$, to demonstrate the effect of the outer-level dual variable.} Without updating $\lambda$, the penalty method requires more inner/outer updates and substantially longer time.

\subsection{A Multi-block Problem: Robust Tensor PCA}
In this section, we use the robust tensor PCA problem considered in \cite{jiang2019structured} to illustrate that the two-level framework can be generalized to multi-block problem \eqref{eq:genericADMM}, and {when Conditions 1 and 2 are satisfied, the resulting two-level algorithm can potentially accelerate one-level ADMM}. In particular, given an estimate $R$ of the CP-rank, the problem of interest is casted as 
	\begin{align}\label{eq: pca}
		\min_{A,B,C,\mathcal{Z}, \mathcal{E}, \mathcal{B}}	\|\mathcal{Z}-\llbracket A, B, C\rrbracket \|^2 +\alpha \|\mathcal{E}\|_1 + \alpha_N \|\mathcal{B}\|_F^2 \quad \mathrm{s.t.}~~ \mathcal{E}+\mathcal{Z}+\mathcal{B} = \mathcal{T},
	\end{align}
	where $A \in \mathbb{R}^{I_{1} \times R}, B \in \mathbb{R}^{I_{2} \times R}, C \in \mathbb{R}^{I_{3} \times R}$, and $\llbracket A, B, C \rrbracket$ denotes the sum of column-wise outer product of $A$, $B$, and $C$. We denote the mode-$i$ unfolding of tensor $\mathcal{Z}$ by  $Z_{(i)}$, the Khatri-Rao product of matrices by $\odot$, the Hadamard product by $\circ$, and the soft shrinkage operator by $\mathbf{S}$. We implement the two-level framework as in Algorithm \ref{alg: PCA}.
	\begin{algorithm}[!htb]
	\caption{: Two-level Algorithm for Robust Tensor PCA}\label{alg: PCA}
	\begin{algorithmic}[1]
		\STATE \textbf{Initialize} primal variables $A^0, B^0,C^0,\mathcal{E}^0, \mathcal{Z}^0, \mathcal{B}^0$, $\mathcal{S}^0$, dual variables $Y^0$, $\Lambda^0$, penalty parameters $\beta$, $\rho=c\beta$, stepsize $\tau = \frac{1}{\rho}$, constants $\delta_i>0$ for $i\in[6]$, $\gamma >1$;
		\FOR{\rev{$k=0,1,2, \cdots$}}
		\STATE \small{$A^{k+1} =[(Z)^k_{(1)}(C^k \odot B^k) + \frac{\delta_1}{2}A^k][((C^k)^\top C^k)\circ((B^k)^\top B^k) +\frac{\delta_1}{2} I_R]^{-1}$;} \label{pca_1}
		\STATE  \small{$B^{k+1} =[(Z)^k_{(2)}(C^k \odot A^k) + \frac{\delta_2}{2}B^k][((C^k)^\top C^k)\circ((A^k)^\top A^k) +\frac{\delta_2}{2} I_R]^{-1}$;}
		\STATE \small{$C^{k+1} =[(Z)^k_{(3)}(B^k \odot A^k) + \frac{\delta_2}{2}C^k][((B^k)^\top B^k)\circ((C^k)^\top C^k) +\frac{\delta_3}{2} I_R]^{-1}$;}
		\STATE \small{$E_{(1)}^{k+1}=\mathbf{S}\left(\frac{\rho}{\rho+\delta_{4}}\left(T_{(1)}+\frac{1}{\rho} Y_{(1)}^{k}-B_{(1)}^{k}-Z_{(1)}^{k} - S^k_{(1)}\right)+\frac{\delta_{4}}{\rho+\delta_{4}} E_{(1)}^{k}, \frac{\alpha}{\rho+\delta_{4}}\right)$;}\label{pca_4}
		\STATE \small{$Z_{(1)}^{k+1}=\frac{1}{2+2 \delta_{5}+\rho}\left(2 A^{k+1}\left(C^{k+1} \odot B^{k+1}\right)^{\top}+2 \delta_{5}Z_{(1)}^{k}+\Lambda_{(1)}^{k}-\rho\left(E_{(1)}^{k+1}+B_{(1)}^{k} +S_{(1)}^k-T_{(1)}\right)\right)$};\label{pca_5}
		\STATE $B^{k+1}_{(1)} = \frac{1}{2\alpha_N+\delta_6+\rho}(Y^k_{(1)}+\delta_6B^{k}_{(1)}-\rho\left(E^{k+1}_{(1)}+Z^{k+1}_{(1)}+S^k_{(1)}-T_{(1)} )\right)$; \label{pca_6}
		\STATE \small{$S^{k+1}_{(1)} = S^k_{(1)}-\tau (-Y^k_{(1)}+\Lambda_{(1)} + \rho(Z_{(1)}^{k+1}+E_{(1)}^{k+1} +B_{(1)}^{k+1}+S_{(1)}^{k}-T_{(1)}))$;}\label{pca_7}
		\STATE \small{$Y^{k+1}_{(1)}=Y^{k}_{(1)}-\rho\left(Z_{(1)}^{k+1}+E_{(1)}^{k+1} +B_{(1)}^{k+1}+S_{(1)}^{k+1}-T_{(1)}\right)$;} \label{pca_y}
		\IF{$\|\mathcal{Z}^{k+1}+\mathcal{E}^{k+1}+\mathcal{B}^{k+1} +\mathcal{S}^{k+1}-\mathcal{T}\|_F$ is smaller than some threshold}
		\STATE $\Lambda \gets \Lambda + \beta \mathcal{S}^{k+1}$, $\beta \gets \gamma\beta$, $\rho\gets c \beta$, $\tau\gets 1/\rho$; \label{pca_lmd}
		\ENDIF
		\ENDFOR
	\end{algorithmic}
	\end{algorithm}
	We firstly perform ADMM-g in steps \ref{pca_1}-\ref{pca_y}. We note that there are some modifications to the ADMM-g described in \cite{jiang2019structured}: since our two-level framework requires the introduction of an additional slack variable $\mathcal{S}$, steps \ref{pca_4}-\ref{pca_6} have an additional term $S^{k}_{(1)}$, and $S_{(1)}^{k+1}$ is then updated in step \ref{pca_7} {via a gradient step as in ADMM-g}; moreover, during the update of $\mathcal{B}$, we also add a proximal term with coefficients $\delta_6/2$. When the residual $\|\mathcal{Z}^{k+1}+\mathcal{E}^{k+1}+\mathcal{B}^{k+1} +\mathcal{S}^{k+1}-\mathcal{T}\|_F$ is small enough, which can serve as an indicator of the convergence of ADMM-g, {we multiply the penalty $\beta$ by some $\gamma$ {as long as $\beta<1.0e+6$}, and update the outer-level dual variable $\Lambda$ as in step \ref{pca_lmd}, where the projection step is omitted.}
	
	{
	We experiment on tensors with dimensions $I_1 = 30$, $I_2 = 50$, and $I_3 = 70$, which match the largest instances tested by \cite{jiang2019structured}; the initial estimation $R$ is given by $R_{CP}+ \lceil 0.2*R_{CP} \rceil$. In our implementation, we set $\gamma=1.5$, $c=3$, and the initial $\beta$ is set to 2; the inner-level ADMM-g terminates if the residual $\|\mathcal{Z}^{k+1}+\mathcal{E}^{k+1}+\mathcal{B}^{k+1} +\mathcal{S}^{k+1}-\mathcal{T}\|_F$ is less than $\max\{1e-5, 1e-3/K_{\text{out}}\}$, where $K_{\text{out}}$ is the current outer-level iteration count. All {other parameters, generation of problem data, and initialization follow} the description in \cite[Section 5]{jiang2019structured}. For each value of the CP rank, we generate 10 cases and let ADMM-g and the proposed two-level Algorithm perform 2000 (inner) iterations. We calculate the geometric mean $r^{\text{Geo}}_k$ of the primal residuals $r_k=\|\mathcal{Z}^{k}+\mathcal{E}^{k}+\mathcal{B}^{k}-\mathcal{T}\|_F$ over 10 cases, and plot $\lg r^{\text{Geo}}_k$ as a function of iteration count $k$ in Figure \ref{fig:res_pca}. We also calculate the geometric mean $e^{\text{Geo}}_k$ of relative errors $\|\mathcal{Z}^k-\mathcal{Z}_{\mathrm{true}}\|_F/\|\mathcal{Z}_{\mathrm{true}}\|_F$ over 10 cases, where $\mathcal{Z}_{\mathrm{true}}$ is the generated true low-rank tensor, and plot $e^{\text{Geo}}_k$ in Figure \ref{fig:res_pca_err}. For our two-level algorithm, the primal residual decreases relatively slow during the first few inner ADMM-g; however, as we update the outer-level dual variable $\Lambda$ and penalty $\beta$, $r^{\text{Geo}}_k$ drops significantly faster than that of ADMM-g, and achieves feasibility with high precision in around 500 inner iterations. The relative error $r^{\text{Geo}}_k$ of the two-level algorithm converges slightly slower than ADMM-g, while it is able to catch up and obtain the same level of optimality.} The result suggests that our proposed two-level algorithm not only ensures convergence for a wider range of applications where ADMM may fail, but also accelerates ADMM on problems where convergence is already guaranteed. 
	 

	\begin{figure}
		\centering
		\caption{Comparison of Infeasibility $\lg r^{\text{Geo}}_k$}\label{fig:res_pca}
		\includegraphics[scale=0.6]{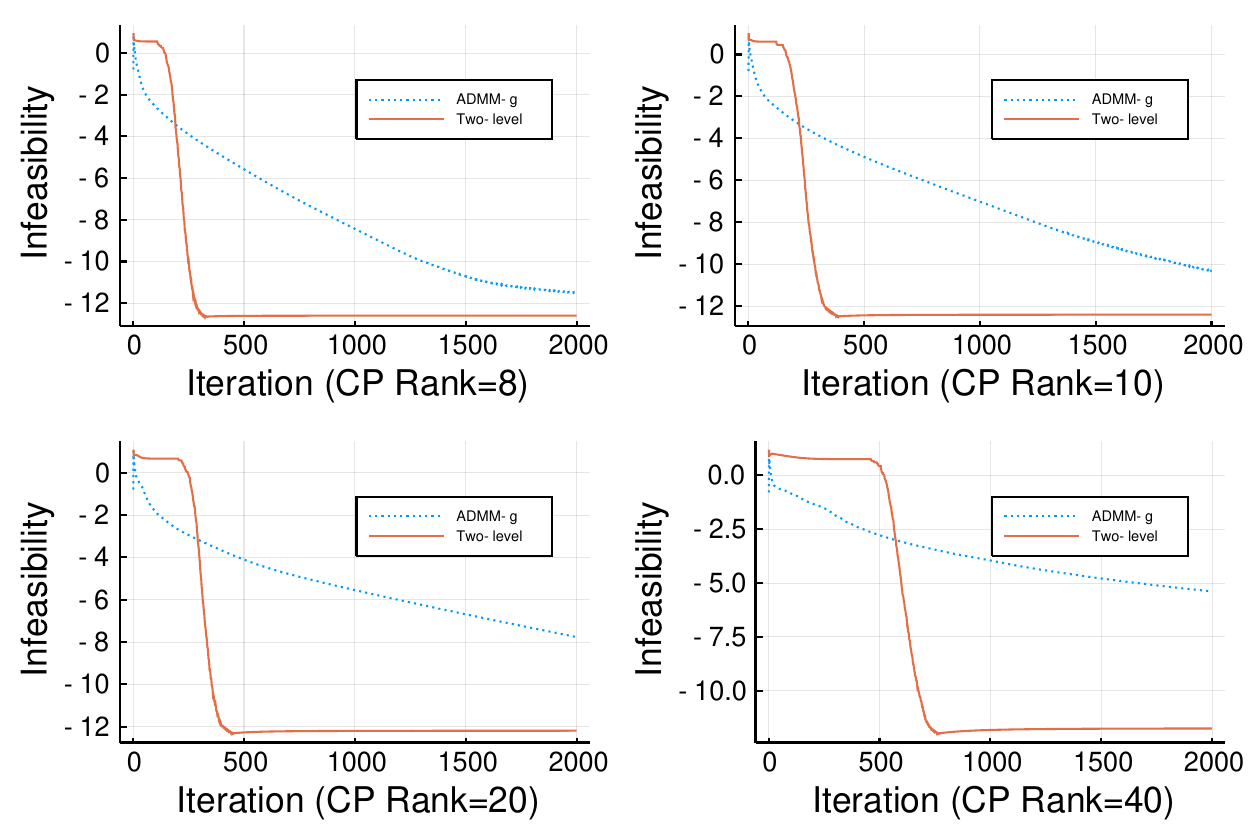}\\
	\end{figure}
	\begin{figure}
		\centering
		\caption{Comparison of Relative Errors $e^{\text{Geo}}_k$}\label{fig:res_pca_err}
		\includegraphics[scale=0.6]{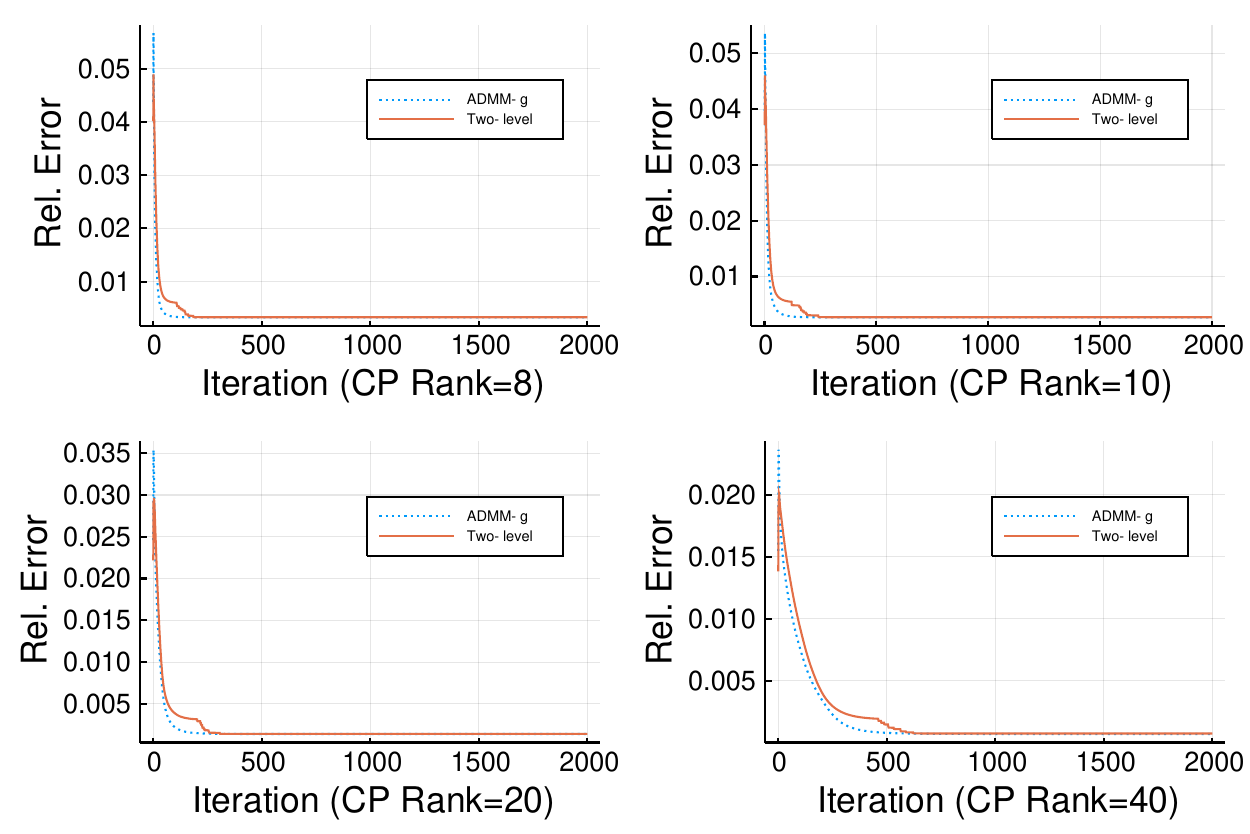}\\
	\end{figure}
	
%

\section{Conclusion}\label{section: Conclusion}
This paper proposes a two-level distributed algorithm to solve the nonconvex constrained optimization problem \eqref{formulation: generic}. We identify some limitation of the standard ADMM algorithm, which in general cannot guarantee convergence when parallelization of constrained subproblems is considered. In order to overcome such difficulties, we propose a novel while concise distributed reformulation, which enables us to separate the underlying complication into two levels. The inner level utilizes multi-block ADMM to facilitate parallel implementation while the outer level uses the classic ALM to guarantee convergence to feasible solutions. 
{
Global convergence, local convergence, and iteration complexity of the proposed two-level algorithm are established, and we certify the possibility to extend the underlying algorithmic framework to solve more complicated nonconvex multi-block problems \eqref{eq:genericADMM}.} In comparison to the other existing algorithms that are capable of solving the same class of nonconvex constrained programs, the proposed algorithm exhibits its advantages in terms of speed, scalability, and robustness. 
{
Thus for general nonconvex constrained multi-block problems, the two-level algorithm can serve an alternative to the workaround proposed in \cite{jiang2019structured} when Condition 1 or 2 fails, and potentially accelerate ADMM on problems where slow convergence is frequently encountered.
}
%
%
%


%
%

\bibliographystyle{spmpsci}    
\bibliography{references}
\appendix
\section{Additional Proofs in Section \ref{section: Convergence}}
\subsection{Proof of Proposition \ref{prop: innerAKKT}} \label{sec: proof of prop1}
\rev{We omit the index $k$ in $(\rho^k,\beta^k,\lambda^k,T_k)$ occasionally. We first prove two lemmas. } 
\begin{lemma}\label{lemma:closedFormSol}
	For all $t \in \Z_{++}$, we have 
	\begin{equation}\label{x_2optCondition}
	\langle B^\top y^{t-1}+ \rho B^\top(A{x}^{t}+B\bar{x}^{t}+z^{t-1}), \hat{x}-\bar{x}^t \rangle\geq 0\quad \forall \hat{x}\in \bar{\mathcal{X}},
	\end{equation} 
	\begin{equation}\label{zoptCondition}
	\lambda+ \beta z^{t}+y^{t} = 0.
	\end{equation}
\end{lemma}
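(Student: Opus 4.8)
The plan is to read off both relations directly from the first-order optimality conditions of the second and third block subproblems in Algorithm~\ref{alg: inner}, using that both subproblems are convex in their respective block so that stationarity is both necessary and sufficient. First I would treat \eqref{x_2optCondition}: line~\ref{a2: secondblock} sets $\bar{x}^t \in \argmin_{\bar{x}} L_{\rho}(x^t,\bar{x},z^{t-1},y^{t-1})$, and since $f(x^t)$, $\langle\lambda,z^{t-1}\rangle$, and $\tfrac{\beta}{2}\|z^{t-1}\|^2$ do not depend on $\bar{x}$, this is equivalent to minimizing the convex function $\mathbb{I}_{\bar{\mathcal{X}}}(\bar{x}) + \langle y^{t-1}, A x^t + B\bar{x} + z^{t-1}\rangle + \tfrac{\rho}{2}\|A x^t + B\bar{x} + z^{t-1}\|^2$ over $\R^{n_2}$. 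The gradient of the smooth part at $\bar{x}^t$ is $B^\top y^{t-1} + \rho B^\top(A x^t + B\bar{x}^t + z^{t-1})$, so the optimality condition $0 \in \partial(\text{smooth} + \mathbb{I}_{\bar{\mathcal{X}}})(\bar{x}^t)$ becomes $-\big(B^\top y^{t-1} + \rho B^\top(A x^t + B\bar{x}^t + z^{t-1})\big) \in N_{\bar{\mathcal{X}}}(\bar{x}^t)$; since $\bar{\mathcal{X}}$ is convex (Assumption~\ref{assumption:compact}), the normal cone inclusion is exactly the variational inequality $\langle B^\top y^{t-1} + \rho B^\top(A x^t + B\bar{x}^t + z^{t-1}), \hat{x} - \bar{x}^t\rangle \ge 0$ for all $\hat{x}\in\bar{\mathcal{X}}$, which is \eqref{x_2optCondition}.

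Next I would handle \eqref{zoptCondition}: line~\ref{a2: thirdblock} sets $z^t \in \argmin_z L_{\rho}(x^t,\bar{x}^t,z,y^{t-1})$, which as a function of $z$ is $\langle\lambda,z\rangle + \tfrac{\beta}{2}\|z\|^2 + \langle y^{t-1}, A x^t + B\bar{x}^t + z\rangle + \tfrac{\rho}{2}\|A x^t + B\bar{x}^t + z\|^2$ — a strongly convex quadratic with no constraint on $z$. Setting its gradient to zero gives $\lambda + \beta z^t + y^{t-1} + \rho(A x^t + B\bar{x}^t + z^t) = 0$. Then the inner dual update $y^t \gets y^{t-1} + \rho(A x^t + B\bar{x}^t + z^t)$ lets me rewrite this as $\lambda + \beta z^t + y^t = 0$, which is \eqref{zoptCondition}. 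For $t=1$ one should also check the base case is consistent with the initialization $\lambda + \beta z^0 + y^0 = 0$ in line~1 of Algorithm~\ref{alg: inner}, but this is not needed for the stated identity since the $z$- and $y$-updates at iteration $1$ already produce \eqref{zoptCondition}.

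I do not anticipate a genuine obstacle here; the lemma is essentially a bookkeeping statement recording the closed-form/stationarity characterization of the last two block updates. The only point requiring a little care is the subdifferential calculus for the $\bar{x}$-update — invoking that the sum rule $\partial(\phi + \mathbb{I}_{\bar{\mathcal{X}}}) = \nabla\phi + N_{\bar{\mathcal{X}}}$ holds here (valid because $\phi$ is smooth and $\bar{\mathcal{X}}$ is convex, so no constraint qualification beyond convexity is needed) and that for a convex set the general normal cone coincides with the convex-analysis normal cone, turning the inclusion into the stated inequality. Everything else is direct differentiation of quadratics plus substitution of the dual update formula.
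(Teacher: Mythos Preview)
Your proposal is correct and follows exactly the approach the paper takes: the paper's proof is the single sentence ``The claim follows from the optimality conditions of the $\bar{x}$ and $z$ updates,'' and you have simply written out those optimality conditions in full, including the substitution of the dual update $y^t = y^{t-1} + \rho(Ax^t + B\bar{x}^t + z^t)$ to obtain \eqref{zoptCondition}.
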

\begin{proof}
	The claim follows from the optimality conditions of the $\bar{x}$ and $z$ updates. \qed
\end{proof}

\begin{lemma}\label{lemma: descent sequence}
	Suppose Assumptions \ref{assumption:compact}-\ref{assumption: descentOverFirstBlock} hold, and we set $\rho =2\beta$, then 
	\begin{equation}\label{eq: descent}
		L_\rho({x}^{t-1},\bar{x}^{t-1},z^{t-1},y^{t-1})- L_\rho({x}^{t},\bar{x}^{t},z^{t},y^{t})\geq \beta\|B\bar{x}^{t-1}-B\bar{x}^{t}\|^2+{\beta}\|z^{t-1}-z^{t}\|^2
	\end{equation}
	for all $t\in \Z_{++}$; in addition, there exists $\underline{L}\in \R$ independent of $k$ such that for all  $t\in \Z_{+}$,
	\begin{equation}
	L_\rho({x}^{t},\bar{x}^{t},z^{t},y^{t})\geq \underline{L} > -\infty.
	\end{equation}
\end{lemma}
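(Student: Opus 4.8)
The plan is to establish the descent inequality \eqref{eq: descent} by tracking the change in $L_\rho$ across one full inner iteration, splitting the decrease into four pieces corresponding to the $x$-update, the $\bar x$-update, the $z$-update, and the dual $y$-update, and then showing that the (possibly increasing) contribution of the dual update is dominated by the (decreasing) contributions of the primal updates thanks to the strong convexity in $z$ and the choice $\rho = 2\beta$. First I would use Assumption \ref{assumption: descentOverFirstBlock} to get $L_\rho(x^t,\bar x^{t-1},z^{t-1},y^{t-1}) \le L_\rho(x^{t-1},\bar x^{t-1},z^{t-1},y^{t-1})$ for free. Next, for the $\bar x$-update, since $L_\rho$ is (as a function of $\bar x$ with everything else fixed) a convex quadratic plus the indicator of the convex set $\bar{\mathcal X}$, and $\bar x^t$ is its exact minimizer, I would use the variational inequality \eqref{x_2optCondition} from Lemma \ref{lemma:closedFormSol} together with strong convexity of the quadratic part (with modulus $\rho = \|\cdot\|^2$ coefficient, i.e. at least $0$ in the degenerate directions of $B$) to obtain
\[
L_\rho(x^t,\bar x^{t-1},z^{t-1},y^{t-1}) - L_\rho(x^t,\bar x^{t},z^{t-1},y^{t-1}) \ge \frac{\rho}{2}\|B\bar x^{t-1}-B\bar x^{t}\|^2 .
\]
For the $z$-update, the objective in $z$ is $\frac{\beta}{2}\|z\|^2 + \langle\lambda,z\rangle + \langle y^{t-1},z\rangle + \frac{\rho}{2}\|Ax^t+B\bar x^t+z\|^2$, which is strongly convex with modulus $\beta+\rho$, so exact minimization at $z^t$ gives
\[
L_\rho(x^t,\bar x^{t},z^{t-1},y^{t-1}) - L_\rho(x^t,\bar x^{t},z^{t},y^{t-1}) \ge \frac{\beta+\rho}{2}\|z^{t-1}-z^{t}\|^2 .
\]

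The only term that can go the wrong way is the dual update: $L_\rho(x^t,\bar x^t,z^t,y^{t-1}) - L_\rho(x^t,\bar x^t,z^t,y^t) = \langle y^{t-1}-y^t, Ax^t+B\bar x^t+z^t\rangle = -\frac{1}{\rho}\|y^t-y^{t-1}\|^2$. So I must bound $\|y^t - y^{t-1}\|$ by the primal displacements. This is the standard but delicate step: using the relation $y^t - y^{t-1} = \rho(Ax^t+B\bar x^t+z^t)$ together with the optimality condition \eqref{zoptCondition}, namely $\lambda + \beta z^t + y^t = 0$ (and the same at $t-1$), I get $y^t - y^{t-1} = -\beta(z^t - z^{t-1})$, hence $\|y^t-y^{t-1}\|^2 = \beta^2\|z^t-z^{t-1}\|^2$. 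This is the crucial cancellation that only works because the $z$-block appears with the identity coefficient and is coupled to the dual through \eqref{zoptCondition} — it is exactly the role of Conditions 1 and 2 discussed earlier. Combining, the net decrease is at least
\[
\frac{\rho}{2}\|B\bar x^{t-1}-B\bar x^t\|^2 + \Big(\frac{\beta+\rho}{2} - \frac{\beta^2}{\rho}\Big)\|z^{t-1}-z^t\|^2,
\]
and plugging in $\rho = 2\beta$ gives coefficients $\beta$ on the $\|B\bar x^{t-1}-B\bar x^t\|^2$ term and $\frac{3\beta}{2} - \frac{\beta}{2} = \beta \ge \frac{\beta}{2}$ on the $\|z^{t-1}-z^t\|^2$ term, which yields \eqref{eq: descent}. (I would state it with the slightly loose constant $\beta/2$ as in the lemma, leaving slack.)

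For the uniform lower bound, I would show $L_\rho(x^t,\bar x^t,z^t,y^t) \ge \underline L$ with $\underline L$ independent of $k$ by rewriting the augmented Lagrangian at a feasible-for-the-inner-constraint point. Using $y^t = -\lambda - \beta z^t$ from \eqref{zoptCondition} and $Ax^t+B\bar x^t+z^t = \frac{1}{\rho}(y^t-y^{t-1})$ — actually more cleanly, complete the square: $\langle y^t, Ax^t+B\bar x^t+z^t\rangle + \frac{\rho}{2}\|Ax^t+B\bar x^t+z^t\|^2 \ge -\frac{1}{2\rho}\|y^t\|^2$, and $\langle\lambda,z^t\rangle + \frac{\beta}{2}\|z^t\|^2 \ge -\frac{1}{2\beta}\|\lambda\|^2$, so $L_\rho \ge f(x^t) - \frac{1}{2\rho}\|y^t\|^2 - \frac{1}{2\beta}\|\lambda\|^2 + \mathbb{I}_{\mathcal X}(x^t)+\mathbb{I}_{\bar{\mathcal X}}(\bar x^t)$. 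Then $f$ is bounded below on the compact set $\mathcal X$ by Assumption \ref{assumption:compact}; $\lambda = \lambda^k$ lies in the fixed box $[\underline\lambda,\overline\lambda]$ so $\|\lambda\|$ is bounded uniformly in $k$; and I need $\|y^t\|$ bounded — here I would use $y^t = -\lambda - \beta z^t$ and the fact that $z^t = -(Ax^t+B\bar x^t) + d$ with $x^t,\bar x^t$ in compact sets, giving $\|z^t\|$ bounded, but this introduces a $\beta$-dependence in $\|y^t\|^2/\rho \sim \beta\|z^t\|^2$. The cleaner route, which I expect to be the main obstacle and which I would handle carefully, is to instead argue that the $\beta$-dependent terms combine favorably: $-\frac{1}{2\rho}\|y^t\|^2$ with $y^t = -\lambda-\beta z^t$ expands to $-\frac{1}{2\rho}\|\lambda\|^2 - \frac{\beta}{\rho}\langle\lambda,z^t\rangle - \frac{\beta^2}{2\rho}\|z^t\|^2$, and combined with $\langle\lambda,z^t\rangle + \frac{\beta}{2}\|z^t\|^2$ (from the explicit $\lambda,\beta$ terms) and $\rho=2\beta$, the coefficient of $\|z^t\|^2$ becomes $\frac{\beta}{2} - \frac{\beta}{4} = \frac{\beta}{4} \ge 0$ and the linear term in $z^t$ has coefficient $(1 - \frac12)\lambda = \frac12\lambda$; so $L_\rho \ge f(x^t) + \frac12\langle\lambda,z^t\rangle - \frac{3}{4\beta}\|\lambda\|^2 \cdot c$ for suitable constants, and then boundedness of $\|z^t\|$ (from compactness of $\mathcal X, \bar{\mathcal X}$ and \eqref{eq: appStationary4}-type feasibility of the inner iterates) plus boundedness of $\lambda$ and the assumption $\beta^k \ge \beta^1 > 0$ gives a $k$-independent lower bound $\underline L$. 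I would also invoke the monotone decrease \eqref{eq: descent} to control $\|z^t\|$ along the iterations via $z^0$ and $L_\rho$ at the initialization if needed. The delicate bookkeeping of which $\beta$-powers cancel is where I would spend the most care.
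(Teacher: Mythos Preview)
Your argument for the descent inequality \eqref{eq: descent} is correct and essentially matches the paper's: both decompose the change in $L_\rho$ over the four sub-steps and use the key identity $y^t-y^{t-1}=-\beta(z^t-z^{t-1})$ from \eqref{zoptCondition} to control the dual ascent by the $z$-displacement. You in fact obtain a slightly sharper constant $\beta$ (instead of $\beta/2$) on $\|z^{t-1}-z^t\|^2$ because you exploit the $\beta$-strong convexity of $h(z)=\langle\lambda,z\rangle+\frac{\beta}{2}\|z\|^2$, whereas the paper only uses its convexity when combining the $z$- and $y$-updates.

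The lower-bound part, however, has a genuine gap. After completing the square on $\langle y^t,r^t\rangle+\frac{\rho}{2}\|r^t\|^2$ and expanding $y^t=-\lambda-\beta z^t$, you correctly reach
\[
L_\rho(x^t,\bar x^t,z^t,y^t)\;\ge\; f(x^t)+\tfrac12\langle\lambda,z^t\rangle+\tfrac{\beta}{4}\|z^t\|^2-\tfrac{1}{4\beta}\|\lambda\|^2,
\]
but then you try to dispose of the $z^t$-terms by invoking a bound on $\|z^t\|$ ``from compactness of $\mathcal X,\bar{\mathcal X}$ and \eqref{eq: appStationary4}-type feasibility of the inner iterates''. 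This is incorrect: condition \eqref{eq: appStationary4} is only available at \emph{termination} of the inner ADMM, not along the iterates $t$, so it gives no uniform bound on $\|z^t\|$; and your fallback of controlling $\|z^t\|$ via the monotone decrease \eqref{eq: descent} is circular, since that is precisely what you are trying to establish. The fix is simple: complete the square once more, $\frac12\langle\lambda,z^t\rangle+\frac{\beta}{4}\|z^t\|^2=\frac{\beta}{4}\|z^t+\lambda/\beta\|^2-\frac{1}{4\beta}\|\lambda\|^2\ge -\frac{1}{4\beta}\|\lambda\|^2$, which gives $L_\rho\ge f(x^t)-\frac{1}{2\beta}\|\lambda\|^2$ directly, with no appeal to $z^t$-boundedness. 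The paper reaches this same bound by a different device: writing $s^t=-(Ax^t+B\bar x^t)$ and using the $\beta$-Lipschitz gradient of $h$ to get $h(z^t)+\langle y^t,z^t-s^t\rangle\ge h(s^t)-\frac{\beta}{2}\|s^t-z^t\|^2$, after which the $-\frac{\beta}{2}\|s^t-z^t\|^2$ is absorbed by $\frac{\rho}{2}\|Ax^t+B\bar x^t+z^t\|^2$ (note $s^t-z^t=-(Ax^t+B\bar x^t+z^t)$) and $h(s^t)\ge-\|\lambda\|^2/(2\beta)$.
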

\begin{proof}
	We firstly show descent over ${x}$ and $\bar{x}$ updates. By Assumption \ref{assumption: descentOverFirstBlock}, we have 
	\begin{equation}\label{eq: descent1}
		L_\rho({x}^{t-1},\bar{x}^{t-1},z^{t-1},y^{t-1})\geq L_\rho({x}^t,\bar{x}^{t-1},z^{t-1},y^{t-1}).
	\end{equation}
	In addition, notice that 
	\begin{align}\label{eq: descent2}
		&L_\rho({x}^t,\bar{x}^{t-1},z^{t-1},y^{t-1})- L_\rho({x}^t,\bar{x}^t,z^{t-1},y^{t-1})\notag\\
		=& \langle y^{t-1}, B\bar{x}^{t-1}-B\bar{x}^t\rangle+ \frac{\rho}{2}\|A{x}^t+ B\bar{x}^{t-1}+z^{t-1}\|^2- \frac{\rho}{2}\|A{x}^t+ B\bar{x}^t+z^{t-1}\|^2\notag\\
		=& \langle B^\top y^{t-1}+ \rho B^\top(A{x}^t+B\bar{x}^t+z^{t-1}),\bar{x}^{t-1}-\bar{x}^t\rangle+\frac{\rho}{2}\|B\bar{x}^{t-1}-B\bar{x}^t\|^2\notag\\ 
		\geq & \frac{\rho}{2}\|B\bar{x}^{t-1}-B\bar{x}^t\|^2,
	\end{align} 
	the second equality is due to $\|a+b\|^2-\|a+c\|^2=2(a+c)^\top (b-c)+\|b-c\|^2$ with $a = A{x}^t+z^{t-1}$, $b = B\bar{x}^{t-1}$, and $c = B\bar{x}^t$, and the last inequality is due to \eqref{x_2optCondition} of Lemma \ref{lemma:closedFormSol}. Now we will show descent over $z$ and $y$ updates. Notice that if we define $h(z) = \lambda^\top z +\frac{\beta}{2}\|z\|^2$, then by Lemma \ref{lemma:closedFormSol}, we have $\nabla h(z^t)=\lambda+\beta z^t = -y^t$; since $h(\cdot)$ is convex, it follows $h(z^{t-1})- h(z^t)+(y^t)^\top (z^{t-1}-z^t)\geq 0$. Notice that
	\begin{align}\label{eq: descent3}
		&L_\rho({x}^t,\bar{x}^t,z^{t-1},y^{t-1})-L_\rho({x}^t,\bar{x}^t,z^t,y^t)\notag\\
		=& h(z^{t-1})- h(z^t)+(y^t)^\top (z^{t-1}-z^t)+\frac{\rho}{2}\|z^{t-1}-z^t\|^2-\rho\|A{x}^t+B\bar{x}^t+z^t\|^2	\notag\\
		\geq&(\frac{\rho {+\beta}}{2}-\frac{\beta^2}{\rho})\|z^{t-1}-z^t\|^2.
	\end{align}
	The equality is due to \rev{the update of dual variable in Algorithm \ref{alg: inner}, the optimality condition \eqref{zoptCondition}, and the fact that}
	$-\rho(a+b)^\top(a+c) + \frac{\rho}{2}\|a+c\|^2-\frac{\rho}{2}\|a+b\|^2=\frac{\rho}{2}\|c-b\|^2-\rho\|a+b\|^2$ with $a =A{x}^t+B\bar{x}^t$, $b = z^t$, and $c = z^{t-1}$; the inequality is due to $h(z)$ being {$\beta$-strongly} convex and \eqref{zoptCondition} of Lemma \ref{lemma:closedFormSol}. Since $\rho=2\beta$, adding \eqref{eq: descent1}-\eqref{eq: descent3} proves \eqref{eq: descent}.
	
	To see $L_\rho({x}^{t},\bar{x}^{t},z^{t},y^{t})$ is bounded from below, we note that the function $h(z)$ defined above is also Lipschitz differentiable with constant $\beta$, so define $s^{t} := - (A{x}^t+B\bar{x}^t)$, we have
	$h(z^t)-(y^t)^\top(s^t-z^t)\ge h(s^t)-\frac{\beta}{2}\|s^t-z^t\|^2$. As a result, for all $t\in \Z_{+}$, 
	\begin{align}\label{eq: upperbounded}
	L_{\rho}({x}^t,\bar{x}^t,z^t,y^t) =& f({x}^t)+h(z^t)+(y^t)^\top(A{x}^t+B\bar{x}^t+z^t)+\frac{\rho}{2}\|A{x}^t+B\bar{x}^t+z^t\|^2\notag\notag\\
		\geq&f({x}^t)+h(s^t)-\frac{\beta}{2}\|s^t-z^t\|^2+\frac{\rho}{2}\|A{x}^t+B\bar{x}^t+z^t\|^2 \notag \\
		\geq & f(x^t) + h(s^t) \geq  f({x}^t) - \frac{\|\lambda\|^2}{2\beta}, 
	\end{align}
	where the last inequality is due to $h(s^t) = \frac{\beta}{2}\|s^t+\frac{\lambda}{\beta}\|^2-\frac{\|\lambda\|^2}{2\beta}$. Since $\lambda$ is bounded, there exists $M\in \R$ such that $\|\lambda\|^2 \leq M$; since the outer-level penalty $\beta^k$ is nondecreasing, we can define $\underline{L} := f^* -M/\beta^1,$
	where $f^* = \min_{{x}\in \mathcal{X}} f({x})$. The minimum is achievable due to Assumption \ref{assumption:compact}. \qed
\end{proof}

\rev{Now we are ready to prove Proposition \ref{prop: innerAKKT}.
\begin{proof}
	By Lemma \ref{lemma: descent sequence}, for any $T\in \Z_{++}$ we have
	\begin{align*}
		\beta \sum_{t=1}^{T} \|B\bar{x}^{t-1}-B\bar{x}^{t}\|^2+\|z^{t-1}-z^{t}\|^2 \leq \overline{L}_k - \underline{L},
	\end{align*}
	which implies the existence of a particular index $t \in [T]$ such that 
	\begin{align}\label{eq: admm_telescope}
			\|B\bar{x}^{t-1}-B\bar{x}^{t}\|^2+\|z^{t-1}-z^{t}\|^2 \leq \frac{\overline{L}_k - \underline{L}}{\beta T}.
	\end{align}
	Using the fact that $\|A{x}^t+B\bar{x}^t+z^t\| = \frac{\beta}{\rho}\|z^{t-1}-z^{t}\| = \frac{1}{2}\|z^{t-1}-z^{t}\|$, the KKT errors can be bounded by
	\begin{align*}
		& \max \{ \|\rho A^\top (B\bar{x}^{t-1}+z^{t-1}-B\bar{x}^t - z^t)\|, \|\rho B^\top (z^{t-1}- z^t)\|,\|A{x}^t+B\bar{x}^t+z^t\|\}	\\
		\leq & \rho \max\{\|A\|, \|B\|, 1/(2\rho)\} \left(\|B\bar{x}^{t-1}-B\bar{x}^{t}\| + \|z^{t-1}-z^{t}\| \right) \\
		\leq & 2\sqrt{2}\beta \max\{\|A\|, \|B\|, 1\}\left(\|B\bar{x}^{t-1}-B\bar{x}^{t}\|^2 + \|z^{t-1}-z^{t}\|^2 \right)^{1/2} \\
		\leq & 2\sqrt{2}\beta \max\{\|A\|, \|B\|, 1\} \left( \frac{\overline{L}_k - \underline{L}}{\beta T}\right)^{1/2} \leq \min \{\epsilon^k_1,\epsilon^k_2, \epsilon^k_3 \},
	\end{align*}
	where the first inequality is due to the triangle inequality, the second inequality is due to the Cauchy–Schwarz inequality and $\rho = \rho^k = 2\beta^k \geq 2\beta^0 \geq 1/2$, the third inequality is due to \eqref{eq: admm_telescope}, and the last inequality is due to the claimed upper bound on $T$. \qed
\end{proof}}

\subsection{Proof of Theorem \ref{thm: feasible}}\label{sec: proof of thm1}
\begin{proof}
	Since ${x}^k\in \mathcal{X}$, $\bar{x}^k\in \bar{\mathcal{X}}$ and $\mathcal{X}$, $\bar{\mathcal{X}}$ are bounded, we know $\|A{x}^k+B\bar{x}^k\|$ is bounded; since $\|A{x}^k+B\bar{x}^k+z^k\|\leq \epsilon^k_3$ and $\epsilon^k_3\rightarrow 0$, $\{z^k\}$ is also bounded. We conclude that $\{({x}^k, \bar{x}^k, z^k)\}$ is bounded and therefore has at least one limit point, denoted by $({x}^*, \bar{x}^*, z^*)$. We use $k_r$ to denote a subsequence converging to $({x}^*, \bar{x}^*, z^*)$. Since $\mathcal{X}$, $\bar{\mathcal{X}}$ are also closed, we have ${x}^*\in \mathcal{X}$ and $\bar{x}^*\in \bar{\mathcal{X}}$. Moreover, $A{x}^*+B\bar{x}^*+z^* = 
	\lim_{r\rightarrow \infty} A{x}^{k_r}+B\bar{x}^{k_r}+z^{k_r} = 0$. Therefore $({x}^*, \bar{x}^*)$ is feasible for problem \eqref{formulation: generic} if and only if $z^*=0$. If $\beta^k$ is bounded, then according to the update scheme, we have $z^k\rightarrow 0$, so $z^*=0$. Now suppose $\beta^k$ is unbounded. Since $\beta^k$ is nondecreasing, any subsequence is also unbounded. By \eqref{eq: appStationary3}, we have
	\begin{equation}\label{eq: z limit}
	\frac{\lambda^{k_r}}{\beta^{k_r}} + z^{k_r} + \frac{y^{k_r}}{\beta^{k_r}} = 0.
	\end{equation}
	Since $\{\lambda^{k_r}\}$ is bounded, we may assume $\lambda^{k_r}\rightarrow \lambda^*$. Again we consider two cases. In the first case, suppose $\{y^{k_r}\}$ has a bounded subsequence, and therefore has a limit point $y^*$. Then taking limit on both sides of \eqref{eq: z limit} along the  subsequence converging to $y^*$, we have $z^*=0$, so $({x}^*, \bar{x}^*)$ is feasible. Otherwise in the second case,  $\lim_{r\rightarrow\infty }\|y^{k_r}\|=+\infty$. Denote $\tilde{y}^{k_r}:=\frac{y^{k_r}}{\beta^{k_r}}$. We know the sequence $\{\tilde{y}^{k_r}\}$ converges to $-z^*$, because 
	\begin{equation}
	\lim_{r\to\infty}\tilde{y}^{k_r}=\lim_{r\rightarrow\infty} \frac{y^{k_r}}{\beta^{k_r}} = \lim_{r\rightarrow\infty} -z^{k_r} - \frac{\lambda^{k_r}}{\beta^{k_r}} = -z^*.
	\end{equation}
	By \eqref{eq: appStationary1} and \eqref{eq: appStationary2}, we have 
	\begin{align*}
	d^{k_r}_1 - \nabla f({x}^{k_r}) - A^\top y^{k_r} \in N_{\mathcal{X}}({x}^{k_r}), \quad d^{k_r}_2 - B^\top y^{k_r} \in N_{\bar{\mathcal{X}}}(\bar{x}^{k_r}).
	\end{align*}
	Since \rev{$N_{\mathcal{X}}({x}^{k_r})$ and $N_{\bar{\mathcal{X}}}(\bar{x}^{k_r})$ are cones and $\beta^{k_r}>0$}, we have 
	\begin{align}\label{eq: cone}
	&\frac{d^{k_r}_1}{\beta^{k_r}} - \frac{\nabla f({x}^{k_r})}{\beta^{k_r}} - A^\top \tilde{y}^{k_r} \in N_{\mathcal{X}}({x}^{k_r}),\quad \frac{d^{k_r}_2}{\beta^{k_r}} - B^\top \tilde{y}^{k_r} \in N_{\bar{\mathcal{X}}}(\bar{x}^{k_r}),
	\end{align}
	where $\tilde{y}^{k_r}:= \frac{y^{k_r}}{\beta^{k_r}}$. \rev{Due to the closedness of normal cones, we can take limit}  on \eqref{eq: cone}, then \eqref{eq: z limit} and \eqref{eq: appStationary4} implies $({x}^*, \bar{x}^*)$ is a stationary point of the problem \eqref{formulation: feasibility}. \qed
\end{proof}	

\subsection{Proof of Theorem \ref{thm: convergence to stationary point}} \label{sec: proof of thm2}
\begin{proof}
	We assume the subsequence $\{({x}^{k_r}, \bar{x}^{k_r}, z^{k_r}, y^{k_r})\}$ converges to the limit point $({x}^*$, $\bar{x}^*$, $z^*$, $y^*)$. Using a similar argument in the proof of Theorem \ref{thm: feasible}, we have ${x}^*\in \mathcal{X}$, $\bar{x}^* \in \bar{\mathcal{X}}$, and $A{x}^*+B\bar{x}^* + z^*= 0$. It remains to show $z^*=0$ to complete primal feasibility. If $\beta^k$ is bounded, then we have $z^k\rightarrow 0$ so $z^*=0$; if $\beta^k$ is unbounded, by taking limits on both sides of \eqref{eq: z limit}, we also have $z^*=0$, since $\lambda^k$ is bounded and $y^{k_r}$ converges to $y^*$. Therefore $({x}^*, \bar{x}^*)$ satisfies \eqref{condition: stationary3}. Taking limits on \eqref{eq: appStationary1} and \eqref{eq: appStationary2} as $k\rightarrow \infty$, we get \eqref{condition: stationary1} and \eqref{condition: stationary2}, respectively. This completes the proof. \qed
\end{proof}

\subsection{Proof of Theorem \ref{thm: rate}} \label{sec: proof of thm3}
\begin{proof}
	We use $k$ to index outer-level iterations of Algorithm \ref{alg: outer_mod} and $t$ to index inner-level iterations of Algorithm \ref{alg: inner}. 
	\rev{By Proposition \ref{prop: innerAKKT}, Assumption \ref{assumption: boundedLagrangian}, and the fact that $\beta^k = \beta^0 \gamma^k$, the number of iterations $T_k$ of the $k$-th inner ADMM, defined in \eqref{eq:Tk}, satisfies }
   \begin{equation}\label{eq: inner_complexity}
   	T_k = \mathcal{O}\left(\frac{\beta^k}{\epsilon^2}\right)=\mathcal{O}\left(\frac{\gamma^{k}}{\epsilon^2}\right).
   \end{equation}
   Summing $T_k$ over $k\in [K]$, we obtain the following bound on the total number of ADMM iterations:
   \begin{equation}\label{eq: total_complexity}
   		\sum_{k=1}^K T_k = \mathcal{O}\left( \frac{1}{\epsilon^2} \frac{ \gamma (\gamma^K -1)}{\gamma -1}\right)=\mathcal{O}\left(\frac{\gamma^K}{\epsilon^2}\right). 
   \end{equation}
   Since conditions \eqref{eq: dual_orig_1} and \eqref{eq: dual_orig_2} are maintained at the termination of each inner-level ADMM, the total number of outer-level ALM iterations, $K$,  depends on the rate at which \eqref{eq: primal_orig} is satisfied. By inequality \eqref{eq: upperbounded} and Assumption \ref{assumption: boundedLagrangian}, at the termination of each ADMM, we have
	\begin{align}\label{eq: unbounded_dual}
		\overline{L}  \geq L_{\rho^k}(x^0, \bar{x}^0,z^0, y^0) \geq f(x^k) - \langle \lambda^k, Ax^k+B\bar{x}^k\rangle +\frac{\beta^k}{2}\|Ax^k+B\bar{x}^k\|^2.
	\end{align}
	The Assumption \ref{assumption:compact}, the fact that $\|\lambda^k\|$ is bounded, and the above inequality imply that 
	$$\|Ax^k+B\bar{x}^k\|^2 = \mathcal{O}\left(\frac{1}{\beta^k}\right) = \mathcal{O}\left(\frac{1}{\gamma^k}\right).$$
	\rev{As a result, there exists an index $K$ such that $\|Ax^{K}+B\bar{x}^{K}\|\leq \epsilon$ and $\gamma^{K} = \mathcal{O}\left(1/\epsilon^2\right)$. 
	Plugging $\gamma^{K} = \mathcal{O}\left(1/\epsilon^2\right)$ into \eqref{eq: total_complexity} gives the claimed $\mathcal{O}(1/\epsilon^{4})$ complexity upper bound.}
	
	\rev{
	For the second claim, consider the $K$-th inner ADMM, at the termination of which we have $\|Ax^K+B\bar{x}^K+z^K\|\leq \frac{\epsilon}{2}$. Since $\|Ax^K+B\bar{x}^K\| \leq \|Ax^K+B\bar{x}^K+z^K\| + \|z^K\| \leq \frac{\epsilon}{2} +\|z^K\|$. It suffices to find an index $K$ such that $\|z^K\|\leq \frac{\epsilon}{2}$. Since $\hat{\lambda}^k$ and $\lambda^k$ are bounded, we have $\|z^k\| = \|\hat{\lambda}^k-\lambda^k\|/\beta^k=\mathcal{O}\left(1/\gamma^k\right)$. As a result, we can choose $K$ such that $\gamma^K = \mathcal{O}(1/\epsilon)$. Plugging $\gamma^{K} = \mathcal{O}(1/\epsilon)$ into \eqref{eq: total_complexity} gives the claimed $\mathcal{O}(1/\epsilon^{3})$ complexity upper bound. \qed}
 \end{proof}

\subsection{Proof of Theorem \ref{thm: multblock_rate}}\label{sec: proof of thm4}
\begin{proof}
According to \cite[Theorem 4.2]{jiang2019structured}, given the inner ADMM penalty $\rho^k$, which is a constant multiple of $\beta^k$, it is sufficient to let the $k$-th ADMM run $T_k = \mathcal{O} ((\rho^k)^2/\epsilon^2)$ iterations in order to have some  $t\in [T_k]$ such that the primal and dual residuals of ADMM at iteration $t$ are less than \rev{$\epsilon/2$}. Denote this solution by $\rev{x^k}=(x_1^k,\cdots, x_p^k)$. Since we update penalties in each outer iteration as $\beta^k = \beta^0 \gamma^{k}$ , the total number of inner-level iterations is bounded by
\begin{align}\label{eq: multiblock_rate}
	\sum_{k=1}^K T_k = \mathcal{O}\left(\sum_{k=1}^K\frac{(\rho^k)^2}{\epsilon^2}\right) =  \mathcal{O}\left(\frac{1}{\epsilon^2}\frac{\gamma^2 (\gamma^{2K}-1)}{\gamma^2 -1}\right)= \mathcal{O}\left(\frac{\gamma^{2K}}{\epsilon^2}\right),
	\end{align}
	where $K$ is the total number of outer-level iterations. It remains to choose $K$ such that $\|Ax^K-b\|\leq \epsilon$, and we consider two cases. 
\begin{enumerate}
	\item Suppose the ``true" dual variable $\hat{\lambda}^k = \lambda^k + \beta^kz^k$ stays bounded. It immediately follows that $\|z^k\| = \mathcal{O}(1/\beta^k)$. \rev{To get $\|z^K\|\leq \frac{\epsilon}{2}$ so that $\|Ax^k-b\|\leq \|Ax^k+z^k-b\| + \|z^k\|\leq \epsilon$,} it suffices to choose some $K$ with $\beta^K = \mathcal{O}(1/\epsilon)$, which follows $\gamma^K = \mathcal{O}(1/\epsilon)$.
	
	\item Otherwise, similar as in Theorem \ref{thm: rate}, since there is a uniform upper bound on the values of augmented Lagrangians, it suffices to let $\beta^K = \mathcal{O}(1/\epsilon^2)$, which follows $\gamma^K = \mathcal{O}(1/\epsilon^2)$.
\end{enumerate}
Finally, plugging $\gamma^K = \mathcal{O}(1/\epsilon)$ and $\gamma^K = \mathcal{O}(1/\epsilon^2)$ into \eqref{eq: multiblock_rate} will give $\mathcal{O}(1/\epsilon^4)$ and $\mathcal{O}(1/\epsilon^6)$ respectively. This completes the proof. \qed
\end{proof}

\section{Additional Proofs in Section \ref{section: Local}}
\subsection{Proof of Proposition \ref{prop: existence of local sol}} \label{sec: proof of prop3}
\begin{proof}
	Denote $L= \nabla^2 f(x^*)+\sum_{i=1}^p \mu_i^*\nabla^2 h_i(x^*)$. We firstly show under Assumption \ref{assumption: second_order_sufficient}, there exists $\underline{\beta}>0$ such that for all $\beta \geq \underline{\beta}$, we have $u^\top L u + \frac{\beta}{2}\|Au+Bv\|^2 > 0$ for all $(u,v) \neq 0$ and $\nabla h(x^*)^\top u = 0$. Suppose for any $k\in \Z_{++}$, there exists $(u^k, v^k)$ on the unit sphere such that $\nabla h(x^*)^\top u^k = 0$ and $(u^k)^\top L u^k + \frac{k}{2}\|Au^k+Bv^k\|^2 \leq 0$. Without loss of generality, assume $(u^k, v^k)$ converges to some $(\bar{u}, \bar{v})$, which is located on the unit sphere as well. Then we have 
		$\bar{u}^\top L\bar{u} + \limsup_{k\rightarrow \infty} \frac{k}{2}\|Au^k+Bv^k\|^2 \leq 0,$
	and it follows $A\bar{u}+B\bar{v} = 0$ and $\bar{u}^\top L\bar{u}\leq 0$, which is a desired contradiction since $\nabla h(x^*)^\top \bar{u} = 0$ and $(\bar{u}, \bar{v})\neq 0$.
			
	Since $\bar{x}^*\in \mathrm{Int} ~\bar{\mathcal{X}}$, we temporarily ignore the constraint $\bar{x}\in \bar{\mathcal{X}}$ and consider the system in variables $(x,\bar{x}, \tilde{\lambda}, \mu, t, \gamma, \tilde{d}_1,\tilde{d_2})$:
	\begin{alignat*}{2}
		\nabla f(x) -A^\top \tilde{\lambda} + \nabla h(x)\mu = \tilde{d}_1, &\quad  -B^\top \tilde{\lambda} = \tilde{d_2},&&\\
		-Ax-B\bar{x} + t + \gamma \lambda^* - \gamma\tilde{\lambda} = 0,&\quad h(x) = 0,&& 
	\end{alignat*}
	which has a solution $(x, \bar{x},  \tilde{\lambda},\mu) =(x^*, \bar{x}^*, \lambda^*, \mu^*)$ for $(t,\tilde{d}_1,\tilde{d}_2)= (0,0,0)$ and any $\gamma \in \R$. We claim that for any $\gamma \in [0, 1/\underline{\beta}]$, the Jacobian of the above system evaluated at $(x^*, \bar{x}^*, \lambda^*, \mu^*, 0, \gamma, 0,0)$ with respect to  $(x, \bar{x},  \tilde{\lambda},\mu)$, namely, the matrix
	\begin{align}\label{eq: matrix}
	\begin{bmatrix}
		L & 0 & -A^\top & \nabla h(x^*) \\ 
		0 & 0 & -B^\top & 0 \\
		-A & -B & -\gamma I & 0 \\
		\nabla h(x^*)^\top & 0 & 0 & 0
	\end{bmatrix},
	\end{align}
	is invertible. To see this, consider the linear system in  $(u,v, w,z)$ of proper dimensions, 
	\begin{subequations}
	\begin{align}
		L u - A^\top w + \nabla h(x^*)z &= 0, \label{eq: li_1} \\ 
		-B^\top w&= 0, \label{eq: li_2}\\
		Au+Bv+\gamma w &= 0, \label{eq: li_3} \\
		\nabla h(x^*)^\top u &= 0. \label{eq: li_4}
	\end{align}
	\end{subequations}
	For $\gamma >0$, notice that $u^\top$\eqref{eq: li_1} + $v^\top \eqref{eq: li_2}$, together with \eqref{eq: li_3} and \eqref{eq: li_4}, yields $u^\top L u+\frac{1}{\gamma}\|Au+Bv\|^2 =0$. By the first claim we know $(u,v)=0$; thus, $w=0$ by \eqref{eq: li_3}, and $z=0$ by \eqref{eq: li_1} and the fact that $\nabla h(x^*)$ has full column rank. For $\gamma =0$, using the same technique as above and \eqref{assumption: sosc}, we can show $(u,v)=0$; since we also assume gradients of all equality constraints are linearly independent, we have $(w,z)=0$ as well.
	
	 Now the Implicit Function Theorem \cite[Chapter 1.2]{bertsekas2014constrained}, together with a change of variable with $t = (\lambda-\lambda^*)/\beta$ and $\gamma = 1/\beta$, proves the existence and uniqueness of the continuous differentiable mappings $x(\cdot)$, $\bar{x}(\cdot)$, $\mu(\cdot)$, and $\tilde{\lambda}(\cdot)$ over $S$ as well as \eqref{eq: implicit_1}-\eqref{eq: implicit_2}; in addition, the $\delta$ defining $S$ can be chosen small enough so that \eqref{eq: implicit_3} holds. Finally, \eqref{eq: implicit_4} follows from the Mean Value Theorem for Integrals \cite[Proposition 2.14]{bertsekas2014constrained}. \qed
\end{proof}

\subsection{Proof of Proposition \ref{proposition: contraction}} \label{sec: proof of prop4}

\begin{proof}
	Notice that 
	\begin{align}
	& \beta^k \|Ax^k+B\bar{x}^k\| = \|\tilde{\lambda}(s^k) - \lambda^k\| \leq  \|\tilde{\lambda}(s^k) -\lambda^*\| + \|\lambda^k-\lambda^*\|\notag \\
	\stackrel{\eqref{eq: implicit_4}}{\leq} &M(\|\lambda^k - \lambda^*\|^2/(\beta^k)^2 + \|\tilde{d}^k_1\|^2+\|\tilde{d}^k_2\|^2)^{1/2} +\|\lambda^k-\lambda^*\|\notag \\ 
	\leq &\frac{M+\beta^k}{\beta^k} \|\lambda^k-\lambda^*\| + M(\|d_1^k\|+\beta^k\|A\|\|d_3^k\|) + M(\|d_2^k\|+\beta^k\|B\|\|d_3^k\|) \notag \\
	\stackrel{\eqref{eq: ub on residual}}{\leq} & \frac{M+\beta^k}{\beta^k}\|\lambda^k-\lambda^*\| + M\eta \|Ax^k+B\bar{x}^k\|, \notag 
	\end{align}
	which implies for $\beta^k > M\eta$,
	\begin{equation} \label{eq: dual contraction 1}
		\|Ax^k+B\bar{x}^k\| \leq \frac{M+\beta^k}{\beta^k(\beta^k-M\eta)}\|\lambda^k-\lambda^*\|.
	\end{equation}
	Similarly, we have 
	\begin{align}
		&\|\hat{\lambda}^k -\lambda^*\| \leq  \|\tilde{\lambda}(s^k) - \lambda^*\| + \|\hat{\lambda}^k - \tilde{\lambda}(s^k)\| \notag \\
		\stackrel{\eqref{eq: implicit_4}}{\leq} & \frac{M}{\beta^k}\|\lambda^k-\lambda^*\| + M(\|d_1^k\|+\beta^k\|A\|\|d_3^k\|) + M(\|d_2^k\|+\beta^k\|B\|\|d_3^k\|) +\beta^k\|d_3^k\| \notag \\
		\stackrel{\eqref{eq: ub on residual}}{\leq} & \frac{M}{\beta^k}\|\lambda^k-\lambda^*\| + M\eta \|Ax^k+B\bar{x}^k\| \stackrel{\eqref{eq: dual contraction 1}}{\leq}  \left(\frac{M}{\beta^k} +\frac{M\eta (M+\beta^k)}{\beta^k(\beta^k-M\eta)} \right) \|\lambda^k -\lambda^*\|. \notag
	\end{align}
	This completes the proof. \qed 
\end{proof}
\end{document}